\newtheorem{theorem}{Theorem}[section]
\newtheorem{corollary}[theorem]{Corollary}
\newtheorem{lemma}[theorem]{Lemma}
\theoremstyle{definition}
\newtheorem{definition}[theorem]{Definition}
\newtheorem{remark}[theorem]{Remark}
\def\n{\underline n}
\def\p{\frak p}
\def\m{\frak m}
\def\n{\mathbb{N}}
\def\Spec{\operatorname{Spec}}
\def\lim{\operatorname{\lim}}
\def\dlim{{\varinjlim}_n}
\def\dim{\operatorname{dim}}
\def\Ext{\operatorname{Ext}}
\def\N-dim{\operatorname{N-dim}}
\def\depth{\operatorname{depth}}
\def\f-depth{\operatorname{f-depth}}
\def\gdepth{\operatorname{gdepth}}
\def\Supp{\operatorname{Supp}}
\def\Ass{\operatorname{Ass}}
\def\ann{\operatorname{ann}}
\def\inf{\operatorname{inf}}
\newcommand{\fR}{\ensuremath{\mathcal R}}
\newcommand{\fN}{\ensuremath{\mathcal N}}
\begin{document}

\title{\normalsize ON THE FINITENESS AND STABILITY OF CERTAIN SETS OF ASSOCIATED PRIME IDEALS OF LOCAL COHOMOLOGY MODULES}
\author{Nguyen Tu Cuong$^{a}$ and Nguyen Van Hoang$^{b,c}$}
\date{\small $^{a}$Institute of Mathematics, Hanoi, Vietnam\\
$^{b}$Thai Nguyen University of Education, Thai Nguyen, Vietnam\\
$^{c}$Meiji Institute for Advanced Study of Mathematical Sciences,
Meiji University, Kawasaki, Japan
}
\maketitle

\vskip .5cm {\begin{quote} {\bf Abstract}{\footnote{{\it{Key words and phrases: }} associated prime, local cohomology, generalized local cohomology, $N$-sequence in dimension $>k$. \hfill\break
{\it{2000 Subject  Classification:}}  13D45, 13D07,  13C15 \hfill\break {$^a$E-mail: ntcuong@math.ac.vn}
\hfill\break {$^b$E-mail: nguyenvanhoang1976@yahoo.com}}. Let $(R,\frak{m})$  be a Noetherian local ring, $I$ an ideal of $R$ and $N$ a finitely generated $R$-module. Let $k{\ge}-1$ be an integer and $ r=\depth_k(I,N)$ the length of a maximal $N$-sequence in dimension $>k$ in $I$ defined by M. Brodmann and L. T. Nhan ({Comm. Algebra,  {\bf 36} (2008), 1527-1536). For a subset $S\subseteq \Spec R$ we set  $S_{{\ge}k}=\{\p\in S\mid\dim(R/\p){\ge}k\}$.  We first prove in  this paper  that  $\Ass_R(H^j_I(N))_{\ge k}$ is a finite set for all $j{\le}r$ }. Let  $\fN=\oplus_{n\ge 0}N_n$ be a finitely generated graded $\fR$-module, where  $\fR$ is a finitely generated standard graded algebra over $R_0=R$. Let $r$ be  the eventual value of $\depth_k(I,N_n)$. Then our second result says that for all $l{\le}r$ the sets $\bigcup_{j{\le}l}\Ass_R(H^j_I(N_n))_{{\ge}k}$ are stable for large $n$.}
\end{quote}

\section{Introduction} 

Let $(R,\m)$ be a Noetherian local ring, $I$ an ideal of $R$ and $N$ a finitely generated $R$-module. 
In 1990, C. Huneke \cite[Problem 4]{Hun} asked whether the set of associated primes of $H^j_I(N)$ is finite for all finitely generated modules $N$ and all $I$. Affirmative answers were given by Huneke-R.Y. Sharp \cite{HuS} and G. Lyubeznik  \cite{Lyu} for equicharacteristic regular local rings. Although, A. Singh  \cite{AS} and M. Katzman  \cite{Kat} provided  examples of finitely generated modules having some local cohomology modules with infinite associated prime ideals, the problem is still true in many situations, such as \cite{BF}, \cite{KhS1}, \cite{Lyu}, \cite{Mar}, \cite{Nh}. However, little is known about the finiteness of $\Ass_R(H^j_I(N)).$  Brodmann-L.T. Nhan introduced the notion of $N$-sequence in dimension $> k$ in \cite{BN}:  Let $k$ be an integer with $k\ge -1$. A sequence $x_1,\ldots,x_r$ of elements of $\m$ is called an $N$-sequence in dimension $> k$ if $x_i\notin\p$ for all $\p\in\Ass_R(N/(x_1,\ldots,x_{i-1})N)$ with $\dim(R/\p)> k$ and all $i=1,\ldots, r$. They also showed that every maximal $N$-sequence in dimension $>k$ in $I$ has the same length. This common length is denoted by $\depth_k(I,N)$. Note that $\depth_{-1}(I,N)$ is just $\depth(I,N)$, $\depth_0(I,N)$ is the filter depth $\f-depth(I,N)$ defined by R. L${\rm\ddot u}$-Z. Tang \cite{LT}, and $\depth_1(I,N)$ is the generalized depth $\gdepth(I,N)$ defined by Nhan \cite{Nh}.  Let $S$ be a subset of $\Spec(R)$ and $k\ge -1$ an integer. We set $S_{\ge k}=\{\p\in S\mid\dim(R/\p)\ge k\}$.   The first main result of this paper is to prove the following theorem.

\begin{theorem}\label{T1} Let $(R,\m)$ be a Noetherian local ring, $I$ an ideal of $R$ and $N$ a finitely generated $R$-module. Let $k$ be an integer with $k\ge -1$ and $r=\depth_k(I,N)$. If $r<\infty$ and $x_1,\ldots,x_r$ is an $N$-sequence in dimension $>k$ in $I$, then for any integer $j\le r$  the set $\Ass_R(H^j_I(N))_{\ge k}$ is finite. Moreover, we have 
$$\bigcup_{j\le l}\Ass_R(H^j_I(N))_{\ge k}=\bigcup_{j\le l}\Ass_R(N/(x_1,\ldots,x_j)N)_{\ge k}\cap V(I)$$
for all $l\le r$.
\end{theorem}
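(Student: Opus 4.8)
The plan is to deduce the finiteness assertion from the displayed equality, and to prove that equality by induction on $l$. Since each $\Ass_R(N/(x_1,\ldots,x_j)N)$ is finite, the right-hand side is a finite set; hence once the equality is established for $l=r$ it yields finiteness of $\bigcup_{j\le r}\Ass_R(H^j_I(N))_{\ge k}$, and therefore of every $\Ass_R(H^j_I(N))_{\ge k}$ with $j\le r$. Throughout I will use that each $H^j_I(N)$ is $I$-torsion, so that $\Ass_R(H^j_I(N))\sb V(I)$; this is what forces the factor $V(I)$ on the right and lets me omit it on the left.

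For the base case $l=0$ I use $H^0_I(N)=\Gamma_I(N)$ together with the standard identity $\Ass_R(\Gamma_I(N))=\Ass_R(N)\cap V(I)$, which after intersecting with $\{\p\mid\dim(R/\p)\ge k\}$ is exactly the claim. For the inductive step write $x=x_1$ and $N_1=N/xN$. By the defining property of an $N$-sequence in dimension $>k$, the elements $x_2,\ldots,x_r$ form an $N_1$-sequence in dimension $>k$ in $I$, and $N_1/(x_2,\ldots,x_{j+1})N_1=N/(x_1,\ldots,x_{j+1})N$; moreover maximality of $x_1,\ldots,x_r$ forces $\depth_k(I,N_1)=r-1$, so the induction hypothesis applies to $N_1$ at level $l-1$. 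After reindexing, the task reduces to proving
\[
\bigcup_{j\le l}\Ass_R(H^j_I(N))_{\ge k}=\Ass_R(H^0_I(N))_{\ge k}\cup\bigcup_{i\le l-1}\Ass_R(H^i_I(N_1))_{\ge k}.
\]

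The engine for this last equality is the long exact cohomology sequence attached to multiplication by $x$. The crucial structural input is that, because $x\notin\p$ for every $\p\in\Ass_R(N)$ with $\dim(R/\p)>k$, the kernel $(0:_N x)$ has dimension $\le k$; splitting $0\to(0:_Nx)\to N\xrightarrow{\;x\;}N\to N_1\to 0$ into two short exact sequences and applying Grothendieck vanishing shows that the local cohomology of $(0:_Nx)$ is supported in dimension $\le k$, hence affects only primes of dimension exactly $k$. Since $x\in I$ acts locally nilpotently on the $I$-torsion module $H^j_I(N)$, I will use the elementary fact that $\Ass_R(M)=\Ass_R(0:_M x)$ for any $x$-torsion $M$; the submodule $(0:_{H^j_I(N)}x)$ is precisely the image of the connecting map $H^{j-1}_I(N_1)\to H^j_I(N)$, which exhibits $\Ass_R(H^j_I(N))$ as the associated primes of a subquotient of $H^{j-1}_I(N_1)$. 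Running this over the range $j\le l$, and using the standard inclusion $\Ass(C)\sb\Ass(B)\cup\Supp(A)$ for a short exact sequence $0\to A\to B\to C\to 0$, should produce both inclusions.

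The hard part is exactly that $x$ is only a nonzerodivisor \emph{modulo dimension} $\le k$: both the kernel $(0:_Nx)$ and the failure of the connecting homomorphisms to be injective produce error terms, and although each such term is supported in dimension $\le k$ and so is invisible to $\Ass_R(-)_{>k}$, the primes of dimension exactly $k$ lie on the boundary and are not automatically discarded. Controlling these borderline primes is where the real difficulty lies, and I expect this to be the decisive step rather than the homological bookkeeping. The cleanest way I know to handle it is prime-by-prime: for $\p\in V(I)$ with $\dim(R/\p)>k$ one localizes at $\p$ and checks that $x_1,\ldots,x_r$ becomes a genuine $N_\p$-regular sequence in $I_\p$ (every relevant associated prime now has dimension $>k$, so it excludes the $x_i$), reducing to the classical identity $\Ass_R(H^s_I(N))=\Ass_R(N/(x_1,\ldots,x_s)N)\cap V(I)$ at the first nonvanishing level over $R_\p$; the finitely many primes with $\dim(R/\p)=k$ must then be matched directly, using that taking the union over $j\le l$ telescopes away their spurious contributions.
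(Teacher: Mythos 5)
Your fallback localization is in fact the paper's actual mechanism --- the proof of Theorem \ref{T11} is entirely prime-by-prime and never touches a long exact sequence --- but your version of it has a genuine gap exactly where you flag one: at primes $\p$ with $\dim(R/\p)=k$. For such a $\p$ the images $x_i/1$ need not form an $N_\p$-regular sequence (an associated prime $\q\subseteq\p$ of $N/(x_1,\ldots,x_{i-1})N$ with $\dim(R/\q)=k$ is permitted to contain $x_i$), and ``matching the boundary primes directly'' or ``telescoping away their spurious contributions'' is not an argument; for $k\ge 0$ these boundary primes carry essentially all the content of the theorem. The idea you are missing is a reduction that makes the dimension of $\p$ irrelevant: given $\p\in\Ass_R(H^{j_0}_I(N))_{\ge k}$ with $j_0\le l$ chosen \emph{minimal}, one may assume $\p\notin\Ass_R(N/(x_1,\ldots,x_j)N)$ for every $j<j_0$, since otherwise $\p$ already lies in the right-hand union ($\p\in V(I)$ because $H^{j_0}_I(N)$ is $I$-torsion). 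This assumption says precisely that $\p R_\p$ is not an associated prime of any $(N/(x_1,\ldots,x_j)N)_\p$ for $j<j_0$; since every non-maximal $\q R_\p\in\Ass_{R_\p}((N/(x_1,\ldots,x_{i-1})N)_\p)$ comes from some $\q\subsetneq\p$ with $\dim(R/\q)>k$ and hence avoids $x_i$, the sequence $x_1/1,\ldots,x_{j_0}/1$ is filter-regular on $N_\p$, and having no maximal associated prime to worry about it is then honestly $N_\p$-regular. Thus $\depth(I_\p,N_\p)=j_0$ (the reverse inequality from $H^{j_0}_I(N)_\p\ne 0$ and Lemma \ref{L31}), and Lemma \ref{L32} over $R_\p$ gives $\p\in\Ass_R(N/(x_1,\ldots,x_{j_0})N)\cap V(I)$. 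The symmetric argument, starting from the minimal $e$ with $\p\in\Ass_R(N/(x_1,\ldots,x_e)N)$, yields the reverse inclusion. No case split on $\dim(R/\p)$ ever occurs.

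Separately, your primary plan (induction on $l$ through the multiplication-by-$x_1$ long exact sequence) does not close as written: $(0:_{H^j_I(N)}x_1)$ is a \emph{quotient} of $H^{j-1}_I(N/x_1N)$, and associated primes of a quotient are controlled only by the support of the ambient module, not by its associated primes; likewise $\Ass(C)\subseteq\Ass(B)\cup\Supp(A)$ produces containments in supports, which cannot deliver the asserted \emph{equality} of sets of associated primes. You correctly sensed that this bookkeeping would not suffice and retreated to localization; the retreat simply needs the minimal-index device above to become a complete proof.
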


Let $\fR=\oplus_{n\ge 0}R_n$ be a finitely generated standard graded algebra over $R_0=R$ and $\oplus_{n\ge 0}N_n$ a finitely generated graded $\fR$-module.  In 1979, M. Brodmann \cite{Br1} had proved that the set $\Ass_R(N_n)$ is stable for large $n$. Based on this result he showed in \cite[Theorem 2 and Proposition 12]{Br2} that the integer $\depth(I,N_n)$ takes a constant value for large $n$. We  generalized this last one in \cite{CHK} and showed that   
$\depth_k(I,N_n)$ takes a constant value $r_k$ for large $n$. Moreover, in \cite{CHK}, we also prove that the set $\bigcup_{j\le r_1}\Ass_R(H^j_I(N_n))\cup\{\m\}$ is stable for large $n$, where $r_1$ is the stable value of $\depth_1(I,N_n)$ for large $n$.  Then the second main result of this paper is  the following theorem.

\begin{theorem}\label{T2} Let $(R,\m)$ be a Noetherian local ring and $I$ an ideal of $R$. Let $\fR=\oplus_{n\ge 0}R_n$ be a finitely generated standard graded algebra over $R_0=R$ and $\fN=\oplus_{n\ge 0}N_n$ a finitely generated graded $\fR$-module. For each integer $k\ge -1$, let $r$ be the eventual value of $\depth_k(I,N_n)$. Then for each integer $l\le r$ the set $\bigcup_{j\le l}\Ass_R(H^j_I(N_n))_{\ge k}$ is stable  for large $n$.
\end{theorem}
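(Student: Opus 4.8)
The plan is to deduce Theorem~\ref{T2} from Theorem~\ref{T1} by producing, once and for all, a single sequence $x_1,\ldots,x_r\in I$ that is a \emph{maximal} $N_n$-sequence in dimension $>k$ in $I$ \emph{simultaneously} for all large $n$. Granting such a uniform sequence, the equality in Theorem~\ref{T1} rewrites each cohomological set $\bigcup_{j\le l}\Ass_R(H^j_I(N_n))_{\ge k}$ in terms of the quotients $N_n/(x_1,\ldots,x_j)N_n$, whose asymptotic associated primes are controlled by Brodmann's stability theorem \cite{Br1}.

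First I would recall from \cite{CHK} that there is an integer $n_1$ with $\depth_k(I,N_n)=r$ for all $n\ge n_1$, so the value $r$ is well defined, and then construct the sequence by induction on its length. Suppose $x_1,\ldots,x_{i-1}\in I$ have been chosen along with a threshold $m_{i-1}\ge n_1$ such that, for every $n\ge m_{i-1}$, the sequence $x_1,\ldots,x_{i-1}$ is an $N_n$-sequence in dimension $>k$ in $I$. Since $x_1,\ldots,x_{i-1}\in R_0$, the quotient $\fN/(x_1,\ldots,x_{i-1})\fN$ is again a finitely generated graded $\fR$-module, so by \cite{Br1} the set $\Ass_R\bigl(N_n/(x_1,\ldots,x_{i-1})N_n\bigr)$ is constant for $n\ge m_{i-1}'$ for some $m_{i-1}'\ge m_{i-1}$; in particular the finite set $B_i$ of its members $\p$ with $\dim(R/\p)>k$ is independent of such $n$. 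Because any $N_n$-sequence in dimension $>k$ in $I$ of length $i-1$ extends to a maximal one of length $r$, we have $\depth_k\bigl(I,N_n/(x_1,\ldots,x_{i-1})N_n\bigr)=r-(i-1)\ge 1$ for every $n\ge m_{i-1}'$ (a standard property of these sequences from \cite{BN}), and hence $I\not\subseteq\p$ for each $\p\in B_i$. Prime avoidance over the finite set $B_i$ then yields $x_i\in I\setminus\bigcup_{\p\in B_i}\p$, and this one element completes $x_1,\ldots,x_i$ to an $N_n$-sequence in dimension $>k$ for all $n\ge m_i:=m_{i-1}'$. After $r$ steps this produces $x_1,\ldots,x_r\in I$ and a threshold $m$ so that, for every $n\ge m$, the sequence is a maximal $N_n$-sequence in dimension $>k$ in $I$.

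With the uniform sequence fixed, I would apply Theorem~\ref{T1} to each $N_n$ with $n\ge m$, obtaining for every $l\le r$ the equality
$$\bigcup_{j\le l}\Ass_R\bigl(H^j_I(N_n)\bigr)_{\ge k}=\Bigl(\bigcup_{j\le l}\Ass_R\bigl(N_n/(x_1,\ldots,x_j)N_n\bigr)_{\ge k}\Bigr)\cap V(I).$$
For each fixed $j\le l$ the module $\fN/(x_1,\ldots,x_j)\fN$ is a finitely generated graded $\fR$-module, so \cite{Br1} makes $\Ass_R\bigl(N_n/(x_1,\ldots,x_j)N_n\bigr)$ eventually constant; restricting to the primes of dimension $\ge k$, intersecting with the fixed set $V(I)$, and taking the finite union over $j\le l$ all preserve this eventual constancy. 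Hence the right-hand side, and therefore the left-hand side, is stable for large $n$, which is the assertion of Theorem~\ref{T2}.

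I expect the only genuine difficulty to lie in the inductive construction of the uniform sequence: a naive choice of $x_i$ depends on $n$, and the crux is that Brodmann's stability collapses the $n$-dependence of the relevant associated primes into a single finite set $B_i$, after which one prime-avoidance step serves all large $n$ at once. The input that $\depth_k$ drops by exactly one upon passing to $N_n/(x_1,\ldots,x_{i-1})N_n$, which guarantees $I\not\subseteq\p$ for $\p\in B_i$, is a standard feature of $N$-sequences in dimension $>k$ \cite{BN}, and the remaining bookkeeping of the finitely many thresholds $m_1\le\cdots\le m_r$ is routine because only $r$ induction steps occur.
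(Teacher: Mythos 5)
Your construction of a single sequence $x_1,\ldots,x_r\in I$ that is an $N_n$-sequence in dimension $>k$ for all large $n$ simultaneously is correct and is essentially the paper's Lemma~\ref{L24}; combined with Theorem~\ref{T1} and Brodmann's stability applied to the graded modules $\fN/(x_1,\ldots,x_j)\fN$, this reproduces the paper's argument in the case $r<\infty$ (Case 3 of the proof of Theorem~\ref{T22}).

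However, there is a genuine gap: you never address the case $r=\infty$, which the statement explicitly allows (then ``$l\le r$'' means every integer $l$). One has $r=\infty$ precisely when $\dim(N_n/IN_n)\le k$ for large $n$. In that situation your induction cannot terminate after ``$r$ steps'', and Theorem~\ref{T1} is simply not applicable, since $r<\infty$ is one of its hypotheses. The subcase $\dim(N_n/IN_n)<k$ is harmless (the sets in question are empty because $\Supp H^j_I(N_n)\subseteq\Supp(N_n/IN_n)$), but the subcase $\dim(N_n/IN_n)=k$ is not: there $\bigcup_{j\le l}\Ass_R(H^j_I(N_n))_{\ge k}$ consists of primes $\p$ with $\dim(R/\p)=k$ that are minimal in $\Supp(N_n/IN_n)$, and it can be nonempty. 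The paper devotes a separate argument to this (Case 2 of the proof of Theorem~\ref{T22}): the candidate primes form a finite set that is stable in $n$ by Brodmann's theorem, and for each such $\p$ the eventual value $s$ of $\depth(I_\p,(N_n)_\p)$ decides membership --- $\p$ lies in $\bigcup_{j\le l}\Ass_R(H^j_I(N_n))_{\ge k}$ for all large $n$ exactly when $s\le l$, because $H^j_I(N_n)_\p=0$ for $j<s$ while $\p$ is minimal in $\Supp H^s_I(N_n)$. You need to supply an argument of this kind to close the proof.
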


For two finitely generated $R$-modules $M$ and $N$,  J. Herzog \cite{Her} defined the generalized local cohomology module $H^j_I(M,N)$ of $M$ and $N$ with respect to $I$ by $$\displaystyle H^j_I(M,N)=\dlim\Ext^j_R(M/I^nM,N).$$  It is clear that 
if $M=R$, $H^j_I(R,N)$ is just the  ordinary local cohomology module $H^j_I(N)$ of $N$ with respect to $I$. Therefore all questions about local cohomology modules discussed above can be asked for generalized local cohomology modules. Thus, to prove the two theorems above, we will show in this paper the generalizations of them for generalized local cohomology.  Let us explain the organization  of the paper. 
 In the next  section, we present auxiliary results on generalized local cohomology modules, the  maximal length of an $N$-sequence in dimension $> k$ in certain ideals of $R$ and relationships between them. In Section 3, we show a generalization of Theorem \ref{T1} for generalized local cohomology modules (Theorem \ref{T11}). As consequences, we get again one of main results \cite[Theorem 3.1]{Nh}  of Nhan (see Corollary \ref{Csua1}) and  a generalization of the main results of Brodmann-Nhan \cite[Theorems 1.1 and 1.2]{BN} (see Corollary \ref{BN2}). The last section is devoted to prove Theorem \ref{T22}, which is a generalization of Theorem \ref{T2} for generalized local cohomology modules. Then we can derive from Theorem \ref{T22} a generalization of the main result of \cite[Theorem 1.2]{CHK} for generalized local cohomology modules (see Corollary \ref{C35}) and some new results about the asymptotic stability of the sets of associated prime ideals of Ext-modules (see Corollaries \ref{Ce} and \ref{BN3}).

\section{Generalized local cohomology module}
Throughout this paper  we use following notations: we denote by  $(R,\m)$  a Noetherian local ring with the maximal ideal $\m$,  and $M$, $N$ finitely generated $R$-modules; let $S$ be a subset of $ \Spec R$ and $k\ge -1$ an integer,  we set  $S_{\ge k}=\{\p\in S\mid\dim(R/\p)\ge k\}$ and  $S_{> k}=\{\p\in S\mid\dim(R/\p) > k\}$; for an ideal $I$ of $R$, $I_M$  is the annihilator of the $R$-module $M/IM$.

We first recall the notion of generalized local cohomology module defined by Herzog in \cite{Her}. 

\begin{definition} For an integer $j\ge 0$, the $j^{th}$ generalized local cohomology module $H^j_I(M,N)$ of $M$ and $N$ with respect to $I$  is defined by 
$$\displaystyle H^j_I(M,N)=\dlim\Ext^j_R(M/I^nM,N).$$ 
\end{definition}
It is clear that $H^j_I(R,N)$ is just the  ordinary local cohomology module $H^j_I(N)$ of $N$ with respect to $I$. On the other hand, Ext-modules can  be also determined  in many cases by generalized local cohomology modules.

\begin{lemma}\label{Lmbs} {\rm (cf. \cite[Lemma 2.1]{CH1})}  If $I\subseteq\ann(M)$ or $\Gamma_I(N)=N$ then $H^j_I(M,N)\cong\Ext^j_R(M,N)$ for all $j\ge 0$.
\end{lemma}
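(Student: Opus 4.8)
The plan is to dispose of the two hypotheses separately, since the isomorphism holds for quite different reasons in each.

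If $I\subseteq\ann(M)$, then $I^nM\subseteq IM=0$ for every $n\ge 1$, so $M/I^nM=M$ and each structure surjection $M/I^{n+1}M\to M/I^nM$ is the identity of $M$. Consequently every transition map in the direct system defining $H^j_I(M,N)$ is the identity, and the colimit is literally $\Ext^j_R(M,N)$. This case is immediate, so the real content is the second hypothesis.

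Suppose now $\Gamma_I(N)=N$. Since $N$ is finitely generated, being $I$-torsion forces $I^tN=0$ for some $t$, whence $\Supp(N)\subseteq V(I)$. I would fix an injective resolution $E^\bullet$ of $N$ by $I$-torsion injective modules; this is possible because the injective hull of an $I$-torsion module is again $I$-torsion (its associated primes lie in $V(I)$) and every cosyzygy of an $I$-torsion module is $I$-torsion, so the minimal injective resolution already has this property. Because homology commutes with the sequential (hence filtered) colimit over $n$,
$$H^j_I(M,N)=\varinjlim_n H^j\big(\Hom_R(M/I^nM,E^\bullet)\big)=H^j\big(\varinjlim_n\Hom_R(M/I^nM,E^\bullet)\big).$$
The crux is then to identify the colimit complex. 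For each $i$, I would view $\Hom_R(M/I^nM,E^i)$ as the subgroup of those $g\in\Hom_R(M,E^i)$ with $g(I^nM)=0$; the transition maps become inclusions, so the colimit is the union $\{g:M\to E^i\mid g(I^nM)=0\text{ for some }n\}$. Here finite generation of $M$ enters decisively: the image $g(M)$ is a finitely generated submodule of the $I$-torsion module $E^i$, hence is killed by a power of $I$, so $g(I^nM)=I^ng(M)=0$ for some $n$, and the union is all of $\Hom_R(M,E^i)$. These identifications are compatible with the differentials of $E^\bullet$, so the colimit complex is exactly $\Hom_R(M,E^\bullet)$, whose $j$-th cohomology is $\Ext^j_R(M,N)$; this yields the desired isomorphism.

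The main obstacle, and essentially the only step needing care, is this interchange: commuting the colimit past cohomology, and then recognizing $\varinjlim_n\Hom_R(M/I^nM,E^i)$ as $\Hom_R(M,E^i)$, which rests jointly on the finite generation of $M$ and on the torsion choice of $E^\bullet$. It is worth noting that the same computation, run for an arbitrary $N$ and an arbitrary injective resolution, gives the clean formula $H^j_I(M,N)=H^j\big(\Hom_R(M,\Gamma_I(E^\bullet))\big)$; the present lemma is precisely the case where $N$ is $I$-torsion, so that $\Gamma_I(E^\bullet)=E^\bullet$ and no further acyclicity argument is required. Everything beyond the direct-limit bookkeeping is then formal.
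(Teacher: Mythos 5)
Your argument is correct. The paper itself offers no proof of this lemma --- it is quoted from \cite[Lemma 2.1]{CH1} --- so there is nothing internal to compare against, but your two-case treatment is exactly the standard argument: the first case is the trivial observation that the defining direct system is constant, and the second rests on resolving the $I$-torsion module $N$ by $I$-torsion injectives and identifying $\varinjlim_n\Hom_R(M/I^nM,E^i)$ with $\Hom_R(M,E^i)$ via finite generation of $M$. All the supporting claims you invoke (that $E(R/\p)$ is $I$-torsion when $\p\supseteq V(I)$, that cohomology commutes with the filtered colimit, that the transition maps become inclusions) are justified correctly, so the proof is complete.
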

The following definition of a generalization of regular sequences is introduced by Brodmann-Nhan in \cite[Definition 2.1]{BN}.  

\begin{definition}\label{dn} Let $k\ge -1$ be an integer. A sequence $x_1,\ldots,x_r$ of elements of $\m$ is called an {\it $N$-sequence in dimension $> k$} if $x_i\notin\p$ for all $\p\in\Ass_R(N/(x_1,\ldots,x_{i-1})N)_{>k}$ and all $i=1,\ldots, r$.
\end{definition}

It is clear that $x_1,\ldots ,x_r$ is an $N$-sequence in dimension $>-1$ if and only if it is a regular sequence of $N$; and $x_1,\ldots ,x_r$ is an $N$-sequence in dimension $>0$ if and only if it is a filter regular sequence of $N$  introduced by P. Schenzel, N. V. Trung and the first author in \cite{Cst}. Moreover, $x_1,\ldots ,x_r$ is an $N$-sequence in dimension $>1$ if and only if it is a generalized regular sequence of $N$ defined by Nhan in \cite{Nh}.

\begin{remark}\label{RM1}  (i) It is easy to see that if $x_1,\ldots,x_r$ is an $N$-sequence in dimension $>k$, then so is $x_1^{n_1},\ldots,x_r^{n_r}$ for all $n_1,\ldots,n_r\in\Bbb N.$

\medskip

\noindent (ii) Note that every maximal $N$-sequence in dimension $>k$ in an ideal $I$ has the same length, this common length is denoted by $\depth_k(I,N)$ (see \cite{CHK}). Furthermore, $\depth_{-1}(I,N)$ is the usual depth $\depth(I,N)$ of $N$ in $I$, $\depth_0(I,N)$ is the filter depth $\f-depth(I,N)$ of $N$ in $I$ denoted by L${\rm\ddot u}$-Tang in \cite{LT}, and $\depth_1(I,N)$ is just the generalized depth $\gdepth(I,N)$ of $N$ in $I$ defined by Nhan \cite{Nh}.
\end{remark}

\begin{lemma}\label{L21} {\rm (\cite[Lemma 2.3]{CHK})} Let $k\ge -1$ be an integer. Then 
$$\depth_k(I,N)=\inf\{\depth_{k-i}(I_\p,N_\p)\mid\p\in\Supp_R(N/IN)_{\ge i}\}$$
for all $0\le i\le k+1$, where we use the convention that $\inf(\emptyset)=\infty$.
\end{lemma}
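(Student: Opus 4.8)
The plan is to prove the two inequalities comprising the asserted equality directly and uniformly in $i$, rather than by induction. Write $r=\depth_k(I,N)$ and fix a maximal $N$-sequence $x_1,\dots,x_r$ in dimension $>k$ in $I$; we may assume $N/IN\neq 0$, since otherwise $N=0$ and both sides are $\infty$, and the case $r=\infty$ will be subsumed by the first inequality below together with the convention $\inf(\emptyset)=\infty$. The one geometric fact used throughout is the elementary chain inequality $\dim(R/\q)\ge\dim(R_\p/\q R_\p)+\dim(R/\p)$ for primes $\q\sb\p$, obtained by concatenating a saturated chain from $\q$ to $\p$ with one from $\p$ to $\m$.

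For the inequality $\depth_k(I,N)\le\inf\{\dots\}$ I would fix $\p\in\Supp_R(N/IN)_{\ge i}$ and show that the images of $x_1,\dots,x_r$ form an $N_\p$-sequence in dimension $>k-i$ in $IR_\p$; this forces $\depth_{k-i}(IR_\p,N_\p)\ge r$. Since $\p\in\Supp_R(N/IN)$ forces $I\sb\p$, these images lie in $IR_\p\sb\p R_\p$, so they are legitimate candidates. Because $\Ass$ commutes with localization, any associated prime of $N_\p/(x_1,\dots,x_{j-1})N_\p$ has the form $\q R_\p$ with $\q\in\Ass_R(N/(x_1,\dots,x_{j-1})N)$ and $\q\sb\p$; if moreover $\dim(R_\p/\q R_\p)>k-i$, then the chain inequality and $\dim(R/\p)\ge i$ give $\dim(R/\q)>k$, whence $x_j\notin\q$ because $x_1,\dots,x_r$ is an $N$-sequence in dimension $>k$. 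This step already settles the case $i=0$, since there $\p=\m$ realizes the value $r$.

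For the reverse inequality I would exploit maximality of $x_1,\dots,x_r$: by prime avoidance the sequence fails to extend inside $I$ only because $I\sb\p^{*}$ for some $\p^{*}\in\Ass_R(N/(x_1,\dots,x_r)N)$ with $\dim(R/\p^{*})>k$. Choosing a saturated chain $\p^{*}=\p_0\subsetneq\dots\subsetneq\p_s=\m$ with $s=\dim(R/\p^{*})\ge k+1$, I would set $\p=\p_{k-i+1}$, which is legitimate because $0\le k-i+1\le s$. Then $\p\sp\p^{*}\sp I$ and $\dim(R/\p)\ge s-(k-i+1)\ge i$, so $\p\in\Supp_R(N/IN)_{\ge i}$, while $\docao(\p/\p^{*})\ge k-i+1$ gives $\dim(R_\p/\p^{*}R_\p)>k-i$. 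By the previous paragraph the images of $x_1,\dots,x_r$ form an $N_\p$-sequence in dimension $>k-i$ in $IR_\p$, and since $\p^{*}R_\p\in\Ass_{R_\p}(N_\p/(x_1,\dots,x_r)N_\p)$ satisfies $\dim(R_\p/\p^{*}R_\p)>k-i$ and contains $IR_\p$, the sequence cannot be extended. Hence it is maximal, so $\depth_{k-i}(IR_\p,N_\p)=r$ and $\inf\{\dots\}\le r$.

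The main obstacle is this reverse inequality: one must convert the single obstruction prime $\p^{*}$, whose only guaranteed feature is $\dim(R/\p^{*})>k$, into a prime $\p$ at which the localized sequence is genuinely maximal for the shifted parameter $k-i$. The delicate point is the simultaneous bookkeeping of dimensions — arranging both $\dim(R/\p)\ge i$ (so that $\p$ lies in the indexing set) and $\docao(\p/\p^{*})>k-i$ (so that $\p^{*}R_\p$ still obstructs) — which is exactly why a prime interior to the chain, not $\p^{*}$ or $\m$ itself, must be chosen; the inequality $s\ge k+1$ is precisely what makes the two requirements compatible.
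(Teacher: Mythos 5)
The paper does not prove this lemma at all: it is quoted verbatim from \cite[Lemma 2.3]{CHK}, so there is no internal proof to compare against. Judged on its own terms, your argument is correct and self-contained. The first half (localizing an $N$-sequence in dimension $>k$ to an $N_\p$-sequence in dimension $>k-i$ via the chain inequality $\dim(R/\q)\ge\dim(R_\p/\q R_\p)+\dim(R/\p)$ and the commutation of $\Ass$ with localization) gives $\depth_{k-i}(I_\p,N_\p)\ge r$ for every $\p\in\Supp_R(N/IN)_{\ge i}$, and the second half correctly manufactures a witness $\p=\p_{k-i+1}$ on a maximal chain above the obstruction prime $\p^*$, the constraint $0\le i\le k+1$ being exactly what makes $\dim(R/\p)\ge i$ and $\docao(\p/\p^*)>k-i$ simultaneously achievable; at that $\p$ the localized sequence is still obstructed by $\p^*R_\p$, so $\depth_{k-i}(I_\p,N_\p)=r$. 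Two small points deserve tightening. First, $N/IN=0\Rightarrow N=0$ is Nakayama and needs $I\sb\m$; this is the standing convention here, but say so rather than assert it. Second, when $r=\infty$ the phrase ``fix a maximal $N$-sequence $x_1,\ldots,x_r$'' is not literally meaningful and the case is not handled by $\inf(\emptyset)=\infty$ alone, since $\Supp_R(N/IN)_{\ge i}$ can be nonempty while $r=\infty$ (e.g.\ $\dim(N/IN)\le k$); the correct fix is the one implicit in your text, namely to apply the first inequality to $N$-sequences in dimension $>k$ of every finite length $m$, concluding $\depth_{k-i}(I_\p,N_\p)\ge m$ for all $m$. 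Neither point is a genuine gap.
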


 The following two results are useful in the sequel.

\begin{lemma}\label{L31} {\rm (cf. \cite[Proposition 5.5]{BZ})} The following equality is true
$$\depth(I_M,N)=\inf\{i\mid H^i_I(M,N)\not=0\}.$$
\end{lemma}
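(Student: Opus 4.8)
The plan is to transfer the problem to a grade (equivalently depth) computation on the individual Ext-modules and then control the passage to the direct limit $H^j_I(M,N)=\dlim\Ext^j_R(M/I^nM,N)$. Write $s=\depth(I_M,N)$. Since $M/I^nM$ is finitely generated with $\Supp_R(M/I^nM)=\Supp_R(M)\cap V(I)=V(I+\ann M)$ for every $n\ge 1$, we have $\sqrt{\ann_R(M/I^nM)}=\sqrt{I+\ann M}=\sqrt{I_M}$. As the grade of an ideal on $N$ depends only on its radical, the classical identity $\inf\{i\mid\Ext^i_R(L,N)\not=0\}=\depth(\ann_R L,N)$ applied to $L=M/I^nM$ yields $\inf\{i\mid\Ext^i_R(M/I^nM,N)\not=0\}=s$ for every $n\ge 1$.

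This immediately gives the vanishing half of the statement: for each $i<s$ and each $n$ we have $\Ext^i_R(M/I^nM,N)=0$, so $H^i_I(M,N)=\dlim\Ext^i_R(M/I^nM,N)=0$, whence $\inf\{i\mid H^i_I(M,N)\not=0\}\ge s$.

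The reverse inequality, i.e. the nonvanishing $H^s_I(M,N)\not=0$, is where the real work lies: a direct limit of the nonzero modules $\Ext^s_R(M/I^nM,N)$ could in principle vanish, because the transition maps $\Ext^s_R(M/I^nM,N)\to\Ext^s_R(M/I^{n+1}M,N)$ need not be injective once $s>0$. I would prove it by induction on $s$, keeping $R$, $I$, $M$ fixed and letting $N$ vary. For $s=0$ the transition maps in degree $0$ are induced by the surjections $M/I^{n+1}M\twoheadrightarrow M/I^nM$, hence are injective; therefore $H^0_I(M,N)$ contains its first term $\Hom_R(M/IM,N)$, which is nonzero since $s=0$. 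For $s\ge 1$, prime avoidance (using $\depth(I_M,N)\ge 1$) furnishes an $N$-regular element $x\in I_M$, and then $\depth(I_M,N/xN)=s-1$. Applying the cohomological $\delta$-functor $H^\bullet_I(M,-)$ to $0\to N\xrightarrow{x}N\to N/xN\to 0$ produces the exact sequence
$$H^{s-1}_I(M,N)\lr H^{s-1}_I(M,N/xN)\xrightarrow{\ \delta\ } H^s_I(M,N).$$
By the vanishing half already established, $H^{s-1}_I(M,N)=0$, so $\delta$ is injective; by the induction hypothesis $H^{s-1}_I(M,N/xN)\not=0$, and therefore $H^s_I(M,N)\not=0$.

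Combining the two halves gives $\inf\{i\mid H^i_I(M,N)\not=0\}=s=\depth(I_M,N)$. The main obstacle is exactly the nonvanishing step, which the induction resolves by trading the uncontrolled direct limit in degree $s$ for an injective connecting map out of degree $s-1$; the only remaining points are the routine degenerate cases ($N=0$ or $I_M=R$, where both sides are $\infty$ under the convention $\inf\emptyset=\infty$) and the standard fact that $H^\bullet_I(M,-)$ is a $\delta$-functor in its second argument.
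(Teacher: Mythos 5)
Your proof is correct. Note that the paper does not actually prove this lemma --- it simply cites \cite[Proposition 5.5]{BZ}, so there is no internal argument to compare against; what you have written is a legitimate self-contained replacement for that citation. Your two halves are both sound: the identification $\sqrt{\ann_R(M/I^nM)}=\sqrt{I_M}$ together with the Rees/Ischebeck formula $\inf\{i\mid\Ext^i_R(L,N)\ne 0\}=\depth(\ann_R L,N)$ and the radical-invariance of grade gives the vanishing of $H^i_I(M,N)$ for $i<s$ for free, and you correctly isolate the genuine issue, namely that the nonvanishing of each $\Ext^s_R(M/I^nM,N)$ does not by itself survive the direct limit. Your induction on $s=\depth(I_M,N)$ handles this cleanly: the base case exploits that precomposition with the surjections $M/I^{n+1}M\twoheadrightarrow M/I^nM$ makes the degree-zero system injective, and the inductive step uses that $H^\bullet_I(M,-)$ is a $\delta$-functor (exactness of direct limits plus naturality of the $\Ext$ long exact sequences in the first variable) so that the already-established vanishing $H^{s-1}_I(M,N)=0$ forces the connecting map $H^{s-1}_I(M,N/xN)\to H^s_I(M,N)$ to be injective. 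This is essentially the classical depth-sensitivity argument adapted to the generalized setting, and it is in the same spirit as the general axiomatic treatment in \cite{BZ}; the one point worth making explicit in a final write-up is the justification that $x$ can be chosen in $\m$ (which holds since $I_M\ne R$ outside the degenerate case you already dispose of), so that $N/xN\ne 0$ and the induction hypothesis applies.
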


\begin{lemma}\label{L32} {\rm (\cite[Theorem 2.4]{CH})} Let $r=\depth(I_M,N)$. Assume that $r<\infty$ and let $x_1,\ldots,x_r$ be a regular sequence of $N$ in $I_M$. Then
$$\Ass_R\big(H^r_I(M,N)\big)=\Ass_R\big(N/(x_1,\ldots,x_r)N\big)\cap V(I_M).$$
\end{lemma}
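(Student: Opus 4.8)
The plan is to induct on $r=\depth(I_M,N)$, using Lemma \ref{L31} to control vanishing: it gives $H^j_I(M,N)=0$ for all $j<r$ while $H^r_I(M,N)\ne 0$. I would first record the structural fact on which everything hinges. Each $\Ext^j_R(M/I^nM,N)$ is annihilated by $\ann(M/I^nM)$, and since $\Supp_R(M/I^nM)=\Supp_R(M/IM)=V(I_M)$ we have $\sqrt{\ann(M/I^nM)}=\sqrt{I_M}$. As $x_1,\ldots,x_r$ lies in $I_M$, a power of each $x_i$ annihilates every $\Ext^j_R(M/I^nM,N)$; passing to the colimit $H^j_I(M,N)=\dlim\Ext^j_R(M/I^nM,N)$, every element is killed by a power of $I_M$. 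In particular each $x_i$ acts \emph{locally nilpotently} on $H^j_I(M,N)$, and every associated prime of $H^r_I(M,N)$ contains $I_M$, so $\Ass_R(H^r_I(M,N))\subseteq V(I_M)$.

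For the base case $r=0$ the sequence is empty and the claim reads $\Ass_R(H^0_I(M,N))=\Ass_R(N)\cap V(I_M)$. I would write $H^0_I(M,N)=\bigcup_n\Hom_R(M/I^nM,N)$ as an increasing union of submodules of $\Hom_R(M,N)$, the transition maps being injective since they are induced by the surjections $M/I^{n+1}M\twoheadrightarrow M/I^nM$. Invoking the standard formula $\Ass_R(\Hom_R(A,B))=\Supp_R(A)\cap\Ass_R(B)$ for finitely generated $A$, together with $\Supp_R(M/I^nM)=V(I_M)$ for every $n\ge 1$, each term has associated primes exactly $V(I_M)\cap\Ass_R(N)$. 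Since $\Ass$ of an increasing union of submodules is the union of the individual $\Ass$ sets, the base case follows immediately.

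For $r\ge 1$ I would put $x=x_1$ and $N'=N/xN$. As $x$ is $N$-regular, the short exact sequence $0\to N\xrightarrow{x}N\to N'\to 0$ produces, upon applying the $\delta$-functor $H^\bullet_I(M,-)$, a long exact sequence. Since $x_2,\ldots,x_r$ is a regular sequence of $N'$ in $I_M$, we have $\depth(I_M,N')=r-1$, so Lemma \ref{L31} gives $H^{r-1}_I(M,N)=0$ and the relevant portion collapses to
$$0\to H^{r-1}_I(M,N')\to H^r_I(M,N)\xrightarrow{\;x\;}H^r_I(M,N),$$
whence $H^{r-1}_I(M,N')\cong(0:_{H^r_I(M,N)}x)$.

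The crux is then the identity $\Ass_R(H^r_I(M,N))=\Ass_R\big((0:_{H^r_I(M,N)}x)\big)$. One inclusion is automatic; for the reverse I would use that $x$ acts locally nilpotently (first paragraph): given $\p=\ann(z)\in\Ass_R(H^r_I(M,N))$, choose the largest $k$ with $w=x^{k-1}z\ne 0$, so $xw=0$ and a short check shows $\ann(w)=\p$, placing $\p$ in $\Ass_R\big((0:x)\big)$. Combined with the isomorphism above this gives $\Ass_R(H^r_I(M,N))=\Ass_R(H^{r-1}_I(M,N'))$, and the induction hypothesis applied to $N'$ with $x_2,\ldots,x_r$ (noting $I_M=\ann(M/IM)$ is unchanged) yields $\Ass_R(H^{r-1}_I(M,N'))=\Ass_R(N'/(x_2,\ldots,x_r)N')\cap V(I_M)=\Ass_R(N/(x_1,\ldots,x_r)N)\cap V(I_M)$, as required. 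I expect the main obstacle to be the careful treatment of the colimit: confirming that $x$ is genuinely locally nilpotent on $H^r_I(M,N)$ (the annihilating power may grow with $n$) and that this local nilpotence is exactly what forces the passage to the $x$-torsion submodule to preserve the full set of associated primes.
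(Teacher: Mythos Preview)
The paper does not prove this lemma; it merely quotes it from \cite[Theorem~2.4]{CH}, so there is no in-paper argument to compare against. Your induction on $r$ is a correct and standard proof of the result: the base case via $\Ass_R(\Hom_R(A,B))=\Supp_R(A)\cap\Ass_R(B)$ and the identification of $H^0_I(M,N)$ with the increasing union $\bigcup_n\Hom_R(M/I^nM,N)\subseteq\Hom_R(M,N)$ are fine, and in the inductive step the long exact sequence together with Lemma~\ref{L31} gives exactly $H^{r-1}_I(M,N')\cong(0:_{H^r_I(M,N)}x_1)$ as you say.

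One small simplification: once you have established (as you do in the first paragraph) that every associated prime $\p$ of $H^r_I(M,N)$ contains $I_M$, you already have $x_1\in\p=\ann(z)$, so $x_1z=0$ and $z$ itself lies in $(0:x_1)$. Thus your ``largest $k$'' is automatically $k=1$ and the passage $\Ass_R(H^r_I(M,N))\subseteq\Ass_R\big((0:x_1)\big)$ is immediate; the general locally-nilpotent argument is not needed here beyond the containment $\Ass_R(H^r_I(M,N))\subseteq V(I_M)$. Your concern about the annihilating power growing with $n$ is harmless for exactly this reason: you only need that each element is killed by \emph{some} power of $I_M$, which is clear since it comes from a fixed $\Ext^j_R(M/I^nM,N)$.
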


\section{Proof of Theorem \ref{T1} and its consequences}

The following theorem for generalized local cohomology modules implies Theorem \ref{T1} when $M=R$. 

\begin{theorem}\label{T11} Let $(R,\m)$ be a Noetherian local ring, $I$ an ideal of $R$ and $M, N$ finitely generated $R$-modules. Let $k$ be an integer with $k\ge -1$ and $r=\depth_k(I_M,N)$. If $r<\infty$ and $x_1,\ldots,x_r$ is an $N$-sequence in dimension $>k$ in $I_M$, then for any integer $l\le r$ we have 
$$\bigcup_{j\le l}\Ass_R(H^j_I(M,N))_{\ge k}=\bigcup_{j\le l}\Ass_R(N/(x_1,\ldots,x_j)N)_{\ge k}\cap V(I_M).$$
Therefore $\Ass_R(H^j_I(M,N))_{\ge k}$  is a finite set for all $j\le r$.
\end{theorem}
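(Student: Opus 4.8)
The plan is to reduce the assertion to its base case $k=-1$ by localizing at each prime of dimension $\ge k$, the base case being exactly what Lemmas~\ref{L31} and~\ref{L32} provide. For $k=-1$ we have $S_{\ge -1}=S$ and $x_1,\dots,x_r$ is an $N$-regular sequence in $I_M$. By Lemma~\ref{L31}, $H^j_I(M,N)=0$ for $j<r=\depth(I_M,N)$, so $\bigcup_{j\le l}\Ass_R(H^j_I(M,N))$ is empty for $l<r$ and equals $\Ass_R(H^r_I(M,N))$ for $l=r$. On the other side, for $j<r$ the element $x_{j+1}\in I_M$ is a nonzerodivisor on $N/(x_1,\dots,x_j)N$, so no associated prime of that module contains $I_M$ and hence $\Ass_R(N/(x_1,\dots,x_j)N)\cap V(I_M)=\emptyset$; for $j=r$, Lemma~\ref{L32} identifies $\Ass_R(H^r_I(M,N))$ with $\Ass_R(N/(x_1,\dots,x_r)N)\cap V(I_M)$. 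Hence both unions coincide for every $l\le r$.

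For general $k$ I would test the claimed equality one prime at a time, using that $H^j_I(M,N)_\p\cong H^j_{I_\p}(M_\p,N_\p)$, that $(I_M)_\p=(I_\p)_{M_\p}$, that $\p\in\Ass_R(X)$ iff $\p R_\p\in\Ass_{R_\p}(X_\p)$ for every module $X$, and that $H^j_{I_\p}(M_\p,N_\p)=0$ whenever $M_\p=I_\p M_\p$; the last point shows both sides omit primes outside $V(I_M)$, so it suffices to treat $\p\supseteq I_M$ with $\dim(R/\p)\ge k$. The decisive elementary fact is that $\q\subsetneq\p$ forces $\dim(R/\q)>\dim(R/\p)$. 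Thus if $\dim(R/\p)>k$, every associated prime $\q\subseteq\p$ of each $N/(x_1,\dots,x_{i-1})N$ satisfies $\dim(R/\q)>k$, so $x_1,\dots,x_r$ localizes to an $N_\p$-regular sequence in $(I_M)_\p$; Lemma~\ref{L21} (with $i=k+1$) gives $r\le\depth((I_M)_\p,N_\p)$, and after extending to a maximal regular sequence the base case in $R_\p$ decides membership of $\p R_\p$.

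The hard case is $\dim(R/\p)=k$, where localizing kills only the associated primes $\q\subsetneq\p$ and leaves $\p R_\p$ possibly associated, so $x_1,\dots,x_r$ becomes merely filter-regular on $N_\p$: regular on $\overline N:=N_\p/\Gamma_{\p R_\p}(N_\p)$ but not on $N_\p$. I would use the sequence $0\to\Gamma_{\p R_\p}(N_\p)\to N_\p\to\overline N\to 0$: the base case applies to $\overline N$, since the $x_i$ are regular on it and hence $r\le\depth((I_M)_\p,\overline N)$, while $\Gamma_{\p R_\p}(N_\p)$ has finite length, hence is $I_\p$-torsion, so Lemma~\ref{Lmbs} gives $H^j_{I_\p}(M_\p,\Gamma_{\p R_\p}(N_\p))\cong\Ext^j_{R_\p}(M_\p,\Gamma_{\p R_\p}(N_\p))$, a finite-length module whose sole candidate associated prime is $\p R_\p$. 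The main obstacle is the bookkeeping in the long exact sequence of $H^\bullet_{I_\p}(M_\p,-)$: one must check that, up to cohomological degree $l$, the appearance of $\p R_\p$ coming from these finite-length $\Ext$ modules exactly accounts for the difference between $\p R_\p\in\bigcup_{j\le l}\Ass_{R_\p}(N_\p/(x_1,\dots,x_j)N_\p)$ and the corresponding statement for $\overline N$, reconciling the connecting homomorphisms with Lemma~\ref{L32}. This matching is where the real difficulty lies.

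The finiteness of $\Ass_R(H^j_I(M,N))_{\ge k}$ is then immediate: taking $l=j\le r$ in the displayed equality, this set is contained in $\bigcup_{j'\le j}\Ass_R(N/(x_1,\dots,x_{j'})N)_{\ge k}\cap V(I_M)$, a finite union of sets $\Ass_R(N/(x_1,\dots,x_{j'})N)$, each finite because $N/(x_1,\dots,x_{j'})N$ is finitely generated.
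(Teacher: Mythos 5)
Your reduction to the base case $k=-1$ and your treatment of primes with $\dim(R/\p)>k$ are sound, but the proposal does not actually prove the theorem: the case $\dim(R/\p)=k$, which you yourself flag as ``where the real difficulty lies,'' is left as an unexecuted plan. The long-exact-sequence bookkeeping with $0\to\Gamma_{\p R_\p}(N_\p)\to N_\p\to \overline N\to 0$ that you propose is not a routine verification --- the connecting maps can create or destroy copies of $\p R_\p$ in $\Ass$ in individual degrees, and nothing in Lemmas \ref{L31} and \ref{L32} controls $\Ass_R$ of a generalized local cohomology module above the depth. So as written there is a genuine gap exactly at the one case that distinguishes $\depth_k$ from ordinary depth.

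The paper's proof shows this hard case can be sidestepped entirely, and the mechanism is worth internalizing because it exploits that the statement is about the \emph{union} over $j\le l$ rather than a degree-by-degree identification. For the inclusion $\subseteq$, given $\p$ with $\dim(R/\p)\ge k$, take $j_0$ \emph{minimal} with $\p\in\Ass_R(H^{j_0}_I(M,N))$ and split into two cases: if $\p\in\Ass_R(N/(x_1,\ldots,x_j)N)$ for some $j<j_0$ you are already done (membership in $V(I_M)$ is automatic), and otherwise $\p R_\p$ is not associated to any $(N/(x_1,\ldots,x_j)N)_\p$ with $j<j_0$, which upgrades the localized filter-regular sequence to an honest regular sequence of length $j_0$; then $\depth((I_M)_\p,N_\p)=j_0$ by Lemma \ref{L31} and Lemma \ref{L32} applies on the nose. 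The reverse inclusion uses the same trick with $e$ minimal such that $\p\in\Ass_R(N/(x_1,\ldots,x_e)N)$. In other words, the correct move in your hard case is not to analyze $\Gamma_{\p R_\p}(N_\p)$ but to re-choose the cohomological degree at which you test $\p$, so that the obstruction to regularity (namely $\p R_\p$ being associated to an earlier quotient) either disappears or immediately certifies membership in the other side. If you replace your third paragraph with this minimal-index argument, the rest of your proposal goes through.
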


\begin{proof} Let $l$ be an integer with $0\le l\le r$. For each $\p\in\bigcup_{j\le l}\Ass_R(H^j_I(M,N))_{\ge k}$ there exists an integer $j_0\le l$ such that $\p\in\Ass_R(H^{j_0}_I(M,N))$ and $\p\notin\Ass_R(H^j_I(M,N))$ for all $j<j_0.$ Assume that  $\p\notin\Ass_R(N/(x_1,\ldots,x_j)N)$ for all $j<j_0$, then $\p R_\p\notin\Ass_{R_\p}( (N/(x_1,\ldots,x_j)N)_\p)$. Hence  
$$\Ass_{R_\p}((N/(x_1,\ldots,x_j)N)_{\p})=\Ass_{R_\p}((N/(x_1,\ldots,x_j)N)_{\p})_{\ge 1}$$
for all $j<j_0$. Since $x_1/1,\ldots,x_{j_0}/1$ is a filter regular sequence of $N_\p$ by Lemma \ref{L21} and the hypothesis of $x_1,\ldots,x_{j_0}$, it follows that $x_1/1,\ldots,x_{j_0}/1$ is a regular sequence of $N_\p$, and so $\depth((I_M)_\p,N_\p)\ge j_0.$ Since $\p\in\Ass_R(H^{j_0}_I(M,N))$, $H^{j_0}_I(M,N)_\p\not=0.$ Thus $\depth((I_M)_\p,N_\p)\le j_0$ by Lemma \ref{L31}, and hence we obtain that $\depth((I_M)_\p,N_\p)= j_0.$ This yields by Lemma \ref{L32} that
$$\Ass_{R_\p}(H^{j_0}_I(M,N)_{\p})=\Ass_{R_\p}((N/(x_1,\ldots,x_{j_0})N)_\p)\cap V((I_M)_\p).$$
Hence $\p\in \Ass_R(N/(x_1,\ldots,x_{j_0})N)\cap V(I_M).$
Thus 
$$\bigcup_{j\le l}\Ass_R(H^j_I(M,N))_{\ge k}\subseteq\bigcup_{j\le l}\Ass_R(N/(x_1,\ldots,x_j)N)_{\ge k}\cap V(I_M).$$
Conversely, for any $\p\in\bigcup_{j\le l}\Ass_R(N/(x_1,\ldots,x_j)N)_{\ge k}\cap V(I_M),$ 
 there is an integer $e\le l$ such that $\p\in\Ass_R(N/(x_1,\ldots,x_e)N)$ and $\p\notin\Ass_R(N/(x_1,\ldots,x_j)N)$ for all $j<e$. So $\p R_\p\notin\Ass_{R_\p}((N/(x_1,\ldots,x_j)N)_\p)$ for all $j<e.$ Hence 
$$\Ass_{R_\p}((N/(x_1,\ldots,x_j)N)_{\p})=\Ass_{R_\p}((N/(x_1,\ldots,x_j)N)_{\p})_{\ge 1}$$ 
for all $j<e$. Therefore $x_1/1,\ldots,x_e/1$ is a regular sequence of $N_\p$, and so that $\depth((I_M)_\p,N_\p)\ge e$. Keep in mind that $\p \in\Ass_R(N/(x_1,\ldots,x_e)N)$, so $\depth((I_M)_\p, (N/(x_1,\ldots,x_e)N)_\p)=0.$ Thus we get $\depth((I_M)_\p,N_\p)=e$. It follows by Lemma \ref{L32} that $\p\in\Ass_R(H^e_I(M,N)).$ This implies that
$$\bigcup_{j\le l}\Ass_R(H^j_I(M,N))_{\ge k}\supseteq\bigcup_{j\le l}\Ass_R(N/(x_1,\ldots,x_j)N)_{\ge k}\cap V(I_M),$$ 
and  the theorem follows. 
\end{proof}

Theorem \ref{T1} and Theorem \ref{T11} lead to many consequences.  It was showed in \cite[Proposition 2.6]{BN}  that if $x_1,\ldots,x_r$ is a permutable $N$-sequence in dimension $>k$ then
$\bigcup_{n_1,\ldots,n_r\in\Bbb N}\Ass_R(N/(x_1^{n_1},\ldots,x_r^{n_r})N)_{\ge k}$ is a finite set. By virtue of Theorem \ref{T11} we can prove this result without the hypothesis that $x_1,\ldots,x_r$ is a permutable $N$-sequence in dimension $>k$.

\begin{corollary}\label{Csua} Let $k$ be an integer with $k\ge -1$ and
$x_1,\ldots,x_r$ an $N$-sequence in dimension $>k$ (not necessary permutable). Then
$$\bigcup_{j\le r}\Ass_R(N/(x_1^{n_1},\ldots,x_j^{n_j})N)_{\ge k}=\bigcup_{j\le
r}\Ass_R(N/(x_1,\ldots,x_j)N)_{\ge k}$$
for all $n_1,\ldots,n_r\in\n$. In particular, $\bigcup_{n_1,\ldots,n_r\in\Bbb
N}\bigcup_{j\le r}\Ass_R(N/(x_1^{n_1},\ldots,x_j^{n_j})N)_{\ge k}$
is a finite set.
\end{corollary}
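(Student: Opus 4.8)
The plan is to deduce the statement from Theorem \ref{T11} (equivalently Theorem \ref{T1}, the case $M=R$), with the finiteness claim being an immediate corollary of the displayed equality: since the right-hand side $\bigcup_{j\le r}\Ass_R(N/(x_1,\ldots,x_j)N)_{\ge k}$ is a fixed finite set (a finite union of sets of associated primes of finitely generated modules), the equality forces the union over all $n_1,\ldots,n_r$ to collapse onto it. So I would concentrate on the asserted equality for a fixed tuple $(n_1,\ldots,n_r)$ of positive integers.

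First I would record the two inputs. By Remark \ref{RM1}(i) the sequence $x_1^{n_1},\ldots,x_r^{n_r}$ is again an $N$-sequence in dimension $>k$, and its terms lie in $I:=(x_1,\ldots,x_r)$. Whenever $\dim(N/(x_1,\ldots,x_r)N)>k$ one has $\Ass_R(N/IN)_{>k}\ne\emptyset$, and since every associated prime of $N/IN$ contains $I$, neither sequence can be prolonged inside $I$; hence both are maximal $N$-sequences in dimension $>k$ in $I$ and $\depth_k(I,N)=r<\infty$. I may therefore apply Theorem \ref{T11} with $M=R$ to each of the two sequences using the same ideal $I$. The left-hand side $\bigcup_{j\le r}\Ass_R(H^j_I(N))_{\ge k}$ does not depend on the chosen sequence, so equating the two right-hand sides yields
$$\bigcup_{j\le r}\Ass_R(N/(x_1,\ldots,x_j)N)_{\ge k}\cap V(I)=\bigcup_{j\le r}\Ass_R(N/(x_1^{n_1},\ldots,x_j^{n_j})N)_{\ge k}\cap V(I).$$

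The heart of the matter, and the step I expect to be the main obstacle, is to remove the intersection with $V(I)$, since for $j<r$ an associated prime of $N/(x_1,\ldots,x_j)N$ need not contain $x_{j+1},\ldots,x_r$. I would do this by induction on $r$, peeling off the top term. The base case $r=0$ is trivial. For the inductive step write $L=\bigcup_{j\le r-1}\Ass_R(N/(x_1,\ldots,x_j)N)_{\ge k}$, $C=\Ass_R(N/(x_1,\ldots,x_r)N)_{\ge k}$, and $D=\Ass_R(N/(x_1^{n_1},\ldots,x_r^{n_r})N)_{\ge k}$; the induction hypothesis applied to $x_1,\ldots,x_{r-1}$ identifies $L$ with its power analogue. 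Both $C$ and $D$ lie in $V(I)$ because $(x_1^{n_1},\ldots,x_r^{n_r})$ and $I$ have the same radical, so the displayed equality reads $E\cup C=E\cup D$ with $E:=L\cap V(I)\sb L$. Absorbing $E$ into the larger set $L$ then gives $L\cup C=L\cup E\cup C=L\cup E\cup D=L\cup D$, which is precisely the desired equality at level $r$.

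Finally I would dispose of the degenerate case $\dim(N/(x_1,\ldots,x_r)N)\le k$, in which $\depth_k(I,N)=\infty$ and Theorem \ref{T11} is not available for the top term. Here $N/(x_1,\ldots,x_r)N$ and $N/(x_1^{n_1},\ldots,x_r^{n_r})N$ share the same support, hence the same dimension $d\le k$; if $d<k$ both sets $C,D$ are empty, while if $d=k$ they each consist of the $d$-dimensional minimal primes of that common support and so coincide. Thus $C=D$, and again $L\cup C=L\cup D$, completing the induction. The only genuinely delicate points are verifying the maximality that yields $\depth_k(I,N)=r$ (so that Theorem \ref{T11} applies) and the clean separation of the degenerate low-dimensional layer; everything else is the formal set-theoretic bookkeeping indicated above.
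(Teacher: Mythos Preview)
Your proof is correct but organized differently from the paper's. Instead of fixing the single ideal $I=(x_1,\ldots,x_r)$, applying Theorem~\ref{T1} once, and then removing the intersection with $V(I)$ by induction on $r$, the paper applies Theorem~\ref{T1} separately for each of the ideals $I_i=(x_1,\ldots,x_i)$ with $0\le i\le r$ and takes the union over $i$. Since every associated prime of $N/(x_1^{n_1},\ldots,x_i^{n_i})N$ already lies in $V(I_i)$, the double union
\[
\bigcup_{i\le r}\Big(\bigcup_{j\le i}\Ass_R(N/(x_1^{n_1},\ldots,x_j^{n_j})N)_{\ge k}\cap V(I_i)\Big)
\]
collapses (via the diagonal terms $i=j$) to the full set $\bigcup_{j\le r}\Ass_R(N/(x_1^{n_1},\ldots,x_j^{n_j})N)_{\ge k}$, while the matching cohomological side $\bigcup_{i\le r}\bigcup_{j\le i}\Ass_R(H^j_{I_i}(N))_{\ge k}$ is visibly independent of $n_1,\ldots,n_r$. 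This sidesteps both the induction and your separate treatment of the degenerate case $\dim(N/IN)\le k$. Conversely, your route is more explicit about verifying the hypothesis $\depth_k(I,N)<\infty$ needed to invoke Theorem~\ref{T1}, and your set-theoretic step $E\cup C=E\cup D\Rightarrow L\cup C=L\cup D$ (with $E=L\cap V(I)\subseteq L$) is a clean way to peel off $V(I)$ once the top-level terms $C,D$ are known to sit inside it.
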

\begin{proof} Let $n_1,\ldots,n_r$ be positive integers, then
$x_1^{n_1},\ldots,x_r^{n_r}$ is an $N$-sequence in dimension $>k$ by Remark
\ref{RM1}. For each integer $i$ with $0\le i\le r$, we set $I_i=(x_1,\ldots,x_i)$.
Since
\begin{alignat}{2}
\Ass_R(N/(x_1^{n_1},\ldots,x_i^{n_i})N)_{\ge
k}=\Ass_R(N/(x_1^{n_1},\ldots,x_i^{n_i})N)_{\ge k}\cap V(I_i)\notag
\end{alignat}
for  any $r$-tuple of positive integers $n_1,\ldots,n_r$, it follows
from Theorem \ref{T1} that 
\begin{alignat}{2}
\bigcup_{i\le r}\Big(\bigcup_{j\le i}\Ass_R(H^j_{I_i}(N))_{\ge
k}\Big)&=\bigcup_{i\le r}\Big(\bigcup_{j\le
i}\Ass_R(N/(x_1^{n_1},\ldots,x_j^{n_j})N)_{\ge k}\cap V(I_i)\Big)\notag\\
&=\bigcup_{j\le r}\Ass_R(N/(x_1^{n_1},\ldots,x_j^{n_j})N)_{\ge k}.\notag
\end{alignat}
Then the corollary follows from the fact  that the set on the left hand of the equality is independent of the
choice of $n_1,\ldots,n_r$.
\end{proof}

Setting $k=1$ in Corollary \ref{Csua} we get the following corollary which covers Theorem 3.1 of \cite{Nh}. 

\begin{corollary}\label{Csua1} Let $x_1,\ldots,x_r$ be a generalized regular sequence of $N$. Then 
$$\bigcup_{j\le r}\Ass_R(N/(x_1^{n_1},\ldots,x_j^{n_j})N)\cup\{\m\}=\bigcup_{j\le r}\Ass_R(N/(x_1,\ldots,x_j)N)\cup\{\m\}$$
for all $n_1,\ldots,n_r\in\Bbb N$. In particular, $\bigcup_{n_1,\ldots,n_r\in\Bbb N}\bigcup_{j\le r}\Ass_R(N/(x_1^{n_1},\ldots,x_j^{n_j})N)$ is a finite set.
\end{corollary}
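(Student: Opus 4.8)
The plan is to obtain this as the special case $k=1$ of Corollary \ref{Csua}, the only extra work being the bookkeeping of the prime $\m$. First I would recall from the discussion following Definition \ref{dn} that a generalized regular sequence of $N$ is precisely an $N$-sequence in dimension $>1$; hence Corollary \ref{Csua} applies verbatim with $k=1$ and yields
\[
\bigcup_{j\le r}\Ass_R(N/(x_1^{n_1},\ldots,x_j^{n_j})N)_{\ge 1}=\bigcup_{j\le r}\Ass_R(N/(x_1,\ldots,x_j)N)_{\ge 1}
\]
for all $n_1,\ldots,n_r\in\n$.

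The second step is the elementary set-theoretic observation that, since $(R,\m)$ is local, a prime $\p$ satisfies $\dim(R/\p)=0$ if and only if $\p=\m$. Consequently, for any subset $S\subseteq\Spec R$ one has $S_{\ge 1}=S\setminus\{\m\}$, and therefore $S\cup\{\m\}=S_{\ge 1}\cup\{\m\}$. Applying this with $S=\bigcup_{j\le r}\Ass_R(N/(x_1^{n_1},\ldots,x_j^{n_j})N)$ and, separately, with $S=\bigcup_{j\le r}\Ass_R(N/(x_1,\ldots,x_j)N)$—and using that the operation $(-)_{\ge 1}$ commutes with unions, so that $S_{\ge 1}$ equals the corresponding union of the $(-)_{\ge 1}$'s—the displayed equality of the first step immediately upgrades to
\[
\bigcup_{j\le r}\Ass_R(N/(x_1^{n_1},\ldots,x_j^{n_j})N)\cup\{\m\}=\bigcup_{j\le r}\Ass_R(N/(x_1,\ldots,x_j)N)\cup\{\m\},
\]
which is the asserted equality.

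For the final finiteness claim, Corollary \ref{Csua} (its ``in particular'' part, again with $k=1$) already guarantees that $\bigcup_{n_1,\ldots,n_r\in\n}\bigcup_{j\le r}\Ass_R(N/(x_1^{n_1},\ldots,x_j^{n_j})N)_{\ge 1}$ is finite; since the full union $\bigcup_{n_1,\ldots,n_r\in\n}\bigcup_{j\le r}\Ass_R(N/(x_1^{n_1},\ldots,x_j^{n_j})N)$ differs from it by at most the single prime $\m$, it is finite as well. I do not anticipate any genuine obstacle here: the entire content is the translation between the dimension filter $(-)_{\ge 1}$ and the adjunction of $\{\m\}$, so the argument is purely formal once the identification of generalized regular sequences with $N$-sequences in dimension $>1$ is in hand.
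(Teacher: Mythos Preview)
Your argument is correct and is exactly the approach of the paper, which simply states that the result follows by setting $k=1$ in Corollary \ref{Csua}; you have merely made explicit the bookkeeping that $S_{\ge 1}\cup\{\m\}=S\cup\{\m\}$ for any $S\subseteq\Spec R$ in a local ring.
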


\begin{corollary}\label{BN1} Let $k$ be an integer with $k\ge -1$. Set $r=\depth_k(\ann(M),N)$. If $r<\infty$ and $x_1,\ldots,x_r$ is an $N$-sequence in dimension $>k$ in $\sqrt{\ann(M)}$, then for any integer $l\le r$ we have 
$$\bigcup_{j\le l}\Ass_R(\Ext^j_R(M,N))_{\ge k}=\bigcup_{j\le l}\Ass_R(N/(x_1,\ldots,x_j)N)_{\ge k}\cap V(\ann(M)).$$ 
\end{corollary}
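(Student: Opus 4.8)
The plan is to deduce this from Theorem \ref{T11} by specializing $I$ to $\ann(M)$ and then passing from the radical to $\ann(M)$ itself by taking powers. First I would set $I=\ann(M)$; then $IM=\ann(M)M=0$, so $M/IM=M$ and consequently $I_M=\ann(M/IM)=\ann(M)$. In particular $\depth_k(I_M,N)=\depth_k(\ann(M),N)=r$ and $V(I_M)=V(\ann(M))$. Moreover, since trivially $I=\ann(M)\subseteq\ann(M)$, Lemma \ref{Lmbs} gives $H^j_I(M,N)\cong\Ext^j_R(M,N)$ for all $j\ge 0$, so the associated primes of the generalized local cohomology modules coincide with those of the Ext-modules.

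The only obstruction to applying Theorem \ref{T11} directly is that the given sequence $x_1,\ldots,x_r$ lies in $\sqrt{\ann(M)}$ rather than in $\ann(M)=I_M$. To remedy this, choose positive integers $m_1,\ldots,m_r$ with $x_i^{m_i}\in\ann(M)$ for each $i$; by Remark \ref{RM1}(i) the sequence $x_1^{m_1},\ldots,x_r^{m_r}$ is again an $N$-sequence in dimension $>k$, and it now lies in $I_M$. As its length equals $r=\depth_k(I_M,N)$, it is a maximal such sequence in $I_M$, so Theorem \ref{T11} applies to it and yields, for every $l\le r$,
$$\bigcup_{j\le l}\Ass_R(\Ext^j_R(M,N))_{\ge k}=\bigcup_{j\le l}\Ass_R(N/(x_1^{m_1},\ldots,x_j^{m_j})N)_{\ge k}\cap V(\ann(M)).$$

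It then remains to replace the powers $x_i^{m_i}$ by the original $x_i$ on the right-hand side. Applying Corollary \ref{Csua} to the prefix $x_1,\ldots,x_l$, itself an $N$-sequence in dimension $>k$ of length $l$, gives
$$\bigcup_{j\le l}\Ass_R(N/(x_1^{m_1},\ldots,x_j^{m_j})N)_{\ge k}=\bigcup_{j\le l}\Ass_R(N/(x_1,\ldots,x_j)N)_{\ge k}.$$
Intersecting both sides with $V(\ann(M))$ and using that intersection distributes over finite unions converts the preceding display into the asserted formula. I expect the main point requiring care to be exactly this passage between $\sqrt{\ann(M)}$ and $\ann(M)$: one must check that the power-invariance of Corollary \ref{Csua} survives the cut by $V(\ann(M))$. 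This holds because the primes in $\Ass_R(N/(x_1,\ldots,x_j)N)_{\ge k}$ contain $x_1,\ldots,x_j$ but need not contain $\ann(M)$, so the intersection with $V(\ann(M))$ is a genuine restriction; nonetheless it commutes with the union and hence preserves the equality.
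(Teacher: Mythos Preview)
Your proof is correct, but it takes a longer path than the paper's. The paper sets $I=\sqrt{\ann(M)}$ rather than $I=\ann(M)$. With that choice one checks directly that $I_M=\sqrt{I_M}=I=\sqrt{\ann(M)}$, so the given sequence $x_1,\ldots,x_r\in\sqrt{\ann(M)}$ already lies in $I_M$ and Theorem~\ref{T11} applies immediately, with $V(I_M)=V(\sqrt{\ann(M)})=V(\ann(M))$; no powers and no appeal to Corollary~\ref{Csua} are needed. Your choice $I=\ann(M)$ has the minor advantage that the hypothesis $I\subseteq\ann(M)$ of Lemma~\ref{Lmbs} is satisfied literally (whereas for $I=\sqrt{\ann(M)}$ one uses that $I^n\subseteq\ann(M)$ for large $n$ so the direct limit defining $H^j_I(M,N)$ stabilizes at $\Ext^j_R(M,N)$), but it forces you into the power-and-unpower detour. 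The final paragraph of your argument, worrying about whether intersection with $V(\ann(M))$ preserves the equality from Corollary~\ref{Csua}, is unnecessary: intersection with a fixed set commutes with unions, so the equality passes through trivially.
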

\begin{proof} Set $I=\sqrt{\ann(M)}$. It is easy to see that $I_M=\sqrt{I_M}=I$. Hence $r=\depth_k(I_M,N)$ and $x_1,\ldots,x_r$ is an $N$-sequence in dimension $>k$ in $I_M$. Moreover, we have in this case that $ \Ext^j_R(M,N)\cong H^j_I(M,N)$ by Lemma \ref{Lmbs}. Therefore the conclusion follows from Theorem \ref{T11}.
\end{proof}

Let $j\ge 0$,  $t>0$ be integers. Let  $\underline a=(a_1,\ldots,a_s)$ be elements  of $R$ and  $\underline t=(t_1,\ldots,t_s)$
an $s$-tuple of positive integers. For an ideal $I$ we set 
$$
\begin{aligned}
&T^j(I^t,N)=\Ass_R(\Ext^j_R(R/I^t,N))\text{ and }\notag\\
&T^j(\underline a^{\underline t}, N)=\Ass_R(\Ext^j_R(R/(a_1^{t_1},\ldots,a_s^{t_s}),N)).\notag
\end{aligned}
$$
Then we get the following relation between these sets.  

\begin{corollary}\label{BN2} Let $k\ge -1$ be an integer, $I$ an ideal of $R$ and $r=\depth_k(I,N)$. If $r<\infty$ and $x_1,\ldots,x_r$ is an $N$-sequence in dimension $>k$ in $\sqrt I$. Then, for any system of generators $a_1,\ldots,a_s$ of $I$ and all integer $l\le r$, we have  
\begin{alignat}{2}
\bigcup_{j\le l}T^j(I^t,N)_{\ge k}=\bigcup_{j\le l}\Ass_R(N/(x_1,\ldots,x_j)N)_{\ge k}\cap V(I)=\bigcup_{j\le l}T^j(\underline a^{\underline t},N)_{\ge k}\notag
\end{alignat}
for all $t\in\Bbb N$ and all $\underline t\in\Bbb N^s$. In particular, for any integer $j\le r$ the sets $\bigcup_{t\in\Bbb N}T^j(I^t,N)_{\ge k}\text{ and }\bigcup_{\underline t\in\Bbb N^s}T^j(\underline a^{\underline t}, N)_{\ge k}$ are contained in the finite set $$\bigcup_{i\le j}\Ass_R(N/(x_1,\ldots,x_i)N)_{\ge k}\cap V(I).$$
\end{corollary}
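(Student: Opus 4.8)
The plan is to deduce everything from Corollary \ref{BN1} by specializing $M$ to the two cyclic modules $R/I^t$ and $R/(a_1^{t_1},\ldots,a_s^{t_s})$. For $M=R/I^t$ one has $\ann(M)=I^t$, and $\Ass_R(\Ext^j_R(R/I^t,N))$ is by definition $T^j(I^t,N)$; likewise $M=R/(a_1^{t_1},\ldots,a_s^{t_s})$ has $\ann(M)=(a_1^{t_1},\ldots,a_s^{t_s})$ and realizes $T^j(\underline a^{\underline t},N)$. The two radical identities I would record at the outset are $\sqrt{\ann(R/I^t)}=\sqrt I$ and $\sqrt{\ann(R/(a_1^{t_1},\ldots,a_s^{t_s}))}=\sqrt{(a_1,\ldots,a_s)}=\sqrt I$ (the latter because each $a_i$ lies in the radical of $(a_1^{t_1},\ldots,a_s^{t_s})$, while that ideal is contained in $(a_1,\ldots,a_s)=I$). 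Consequently $V(\ann M)=V(I)$ in both cases, which is precisely the common intersecting set $V(I)$ appearing on the right-hand side of the claimed equalities.

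The main point requiring care is to verify that the data needed to invoke Corollary \ref{BN1} is available for these modules, namely that $r=\depth_k(\ann(M),N)$ and that $x_1,\ldots,x_r$ is still an $N$-sequence in dimension $>k$ in $\sqrt{\ann(M)}$. Since $\sqrt{\ann(M)}=\sqrt I$ in both cases and the sequence is assumed to lie in $\sqrt I$, the sequence hypothesis is immediate. For the depth equality I would establish the invariance $\depth_k(J,N)=\depth_k(\sqrt J,N)$ for any ideal $J$: given a maximal $N$-sequence in dimension $>k$ inside $J$, it is already maximal inside $\sqrt J$, since any $y\in\sqrt J$ extending it satisfies $y^n\in J$ for some $n$, and by Remark \ref{RM1} together with the equivalence $y\notin\p\iff y^n\notin\p$, the sequence extended by $y^n$ would again be an $N$-sequence in dimension $>k$ inside $J$, contradicting maximality. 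Applying this with $J=I^t$ and $J=(a_1^{t_1},\ldots,a_s^{t_s})$ gives $\depth_k(\ann(M),N)=\depth_k(\sqrt I,N)=\depth_k(I,N)=r$ in both cases. This invariance step is where essentially all of the content lies; everything else is bookkeeping.

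With the hypotheses verified, applying Corollary \ref{BN1} to $M=R/I^t$ yields
$$\bigcup_{j\le l}T^j(I^t,N)_{\ge k}=\bigcup_{j\le l}\Ass_R(N/(x_1,\ldots,x_j)N)_{\ge k}\cap V(I),$$
and applying it to $M=R/(a_1^{t_1},\ldots,a_s^{t_s})$ gives the same right-hand side equal to $\bigcup_{j\le l}T^j(\underline a^{\underline t},N)_{\ge k}$; chaining the two produces the displayed equalities, valid for every $t$ and every $\underline t$. For the final assertion I would take $l=j$ and note that
$$T^j(I^t,N)_{\ge k}\subseteq\bigcup_{i\le j}T^i(I^t,N)_{\ge k}=\bigcup_{i\le j}\Ass_R(N/(x_1,\ldots,x_i)N)_{\ge k}\cap V(I);$$
since the right-hand side does not depend on $t$, the union over all $t$ (and, by the same argument, over all $\underline t$) is contained in it. That set is a finite union of sets of the form $\Ass_R(N/(x_1,\ldots,x_i)N)$, each finite because $N/(x_1,\ldots,x_i)N$ is finitely generated, so it is finite, which completes the proof.
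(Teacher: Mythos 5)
Your proposal is correct and follows essentially the same route as the paper: specialize Corollary \ref{BN1} to $M=R/I^t$ and $M=R/(a_1^{t_1},\ldots,a_s^{t_s})$, using $\sqrt{I^t}=\sqrt I=\sqrt{(a_1^{t_1},\ldots,a_s^{t_s})}$ to identify the hypotheses and the common right-hand side. The only difference is that you explicitly verify the invariance $\depth_k(J,N)=\depth_k(\sqrt J,N)$ (via Remark \ref{RM1}), a step the paper leaves implicit; this is a worthwhile clarification but not a different argument.
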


\begin{proof} Since  $\sqrt{I^t}=\sqrt I=\sqrt{(a_1^{t_1},\ldots,a_s^{t_s})}$ for all positive integer $t, t_1,\ldots,t_s$, 
thanks to Corollary \ref{BN1} we obtain by setting $M=R/I^t$ and $M= R/(a_1^{t_1},\ldots,a_s^{t_s})$ that
$$\bigcup_{j\le l}\Ass_R(\Ext^j_R(R/I^t,N))_{\ge k}=\bigcup_{j\le l}\Ass_R(N/(x_1,\ldots,x_j)N)_{\ge k}\cap V(I)\ \text{ and }$$
$$\bigcup_{j\le l}\Ass_R(\Ext^j_R(R/(a_1^{t_1},\ldots,a_s^{t_s}),N))_{\ge k}=\bigcup_{j\le l}\Ass_R(N/(x_1,\ldots,x_j)N)_{\ge k}\cap V(I)$$
for all $l\le r$, and the corollary follows.
\end{proof}

\noindent
\begin{remark}\label{Rmrf}
The main result of Brodmann-Nhan \cite[Theorems 1.1 and 1.2]{BN} says  that  for a non negative integer $r$, if $\dim \Supp H_I^i(N)\le k$ for all $i <r$, then the sets $\bigcup_{t\in\Bbb N}T^j(I^t,N)_{\ge k}\text{ and }\bigcup_{\underline t\in\Bbb N^s}T^j(\underline a^{\underline t}, N)_{\ge k}$ are contained in the finite set $\bigcup _{i\le j}\Ass_R(\Ext^i_R(R/I,N))$ for  all $j\le r$. Moreover, if  $x_1,\ldots,x_r$ is at the same time  a permutable $N$-sequence in dimension $>k$ and a permutable $I$-filter regular sequence in  $I$, then these sets are contained in the finite set
\begin{equation}\Ass_R(N/(x_1,\ldots,x_j)N)_{\ge k+1} \cup\bigcup_{i\le j}\Ass_R(N/(x_1,\ldots,x_i)N)_k,\tag{*}\end{equation}
where $\Ass_R(N/(x_1,\ldots,x_i)N)_k=\{\p\in\Ass_R(N/(x_1,\ldots,x_i)N)\mid\dim R/\p=k\}$. In fact, since  the equality in Corollary \ref{BN2} do not depend on $t$ and $\underline t$, thus for any $j\le r$ the sets $\bigcup_{t\in\Bbb N}T^j(I^t,N)_{\ge k}$ and $\bigcup_{\underline t\in\Bbb N^s}T^j(\underline a^{\underline t}, N)_{\ge k}$ are contained in the set $\bigcup_{i\le j}T^i(I,N)_{\ge k}$ $=\bigcup _{i\le j}\Ass_R(\Ext^i_R(R/I,N))_{\ge k}$. This shows that Corollary \ref{BN2} covers Theorem 1.1 of \cite{BN}. 
Moreover, let $\p\in T^j(I^t,N)_{\ge k}\cup T^j(\underline a^{\underline t}, N)_{\ge k}$ for some $t$, $\underline t$. If $\dim(R/\p)=k$ then $\p$ belongs to the set $(*)$ by Corollary \ref{BN2}. If $\dim(R/\p)\ge k+1$ then $\depth(I_\p,N_\p)=r$. Therefore $j=r$ in this case, and so $\p\in T^r(I,N)$. Since $\Ext^r_R(R/I,N)\cong H^r_I(R/I,N)$ by Lemma \ref{Lmbs}, it follows by Lemma \ref{L32} that $\p\in\Ass(N/(x_1,\ldots,x_r)N)_{\ge k+1}$. Thus Corollary \ref{BN2} also covers Theorem 1.2 of \cite{BN} for local rings without the assumption that  $x_1,\ldots,x_r$ is at the same time  a permutable $N$-sequence in dimension $>k$ and a permutable $I$-filter regular sequence in  $I$. 
\end{remark}
\section{Proof of Theorem \ref{T2} and its consequences}

In this section we always denote by $\fR=\oplus_{n\ge 0}R_n$ a finitely generated standard graded algebra over $R_0=R$ and $\fN=\oplus_{n\ge 0}N_n$ a finitely generated graded $\fR$-module. Firstly, we recall two results on the asymptotic stability of associated prime ideals \cite{Br1} (see also \cite[Theorem 3.1]{Me}) and of generalized depths  $\depth_k(I,N_n)$ \cite[Theorem 1.1]{CHK} as follows. 
 
\begin{lemma} \label{L22} The set  $\Ass_R(N_n)$ is stable for large $n$.
\end{lemma}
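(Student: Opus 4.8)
The plan is to prove the statement in two stages: first that the union $\bigcup_{n\ge 0}\Ass_R(N_n)$ is a \emph{finite} set, and then that for each prime in this finite set the condition ``$\p\in\Ass_R(N_n)$'' has an eventually constant truth value as $n\to\infty$. Combining the two and taking the largest of the finitely many thresholds then yields stability. Throughout I would use that $\fN$ is a Noetherian $\fR$-module, so that $\Ass_{\fR}(\fN)$ is finite and, since $\fN$ is graded, consists of graded primes of $\fR$.

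For finiteness, take $\p\in\Ass_R(N_n)$ and choose a homogeneous $x\in N_n$ with $\ann_R(x)=\p$. I would pass to the graded ideal $\ann_{\fR}(x)$ and observe that, because $\p$ sits in degree $0$, one has $\sqrt{\ann_{\fR}(x)}\cap R=\p$. Since the cyclic graded module $\fR x\cong \fR/\ann_{\fR}(x)$ is a submodule of $\fN$, the minimal primes of $\ann_{\fR}(x)$ lie in $\Ass_{\fR}(\fN)$; writing $\p$ as the intersection of their contractions to $R$ and using that $\p$ is prime forces $\p=\mathfrak P\cap R$ for some $\mathfrak P\in\Ass_{\fR}(\fN)$. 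Hence $\bigcup_{n}\Ass_R(N_n)$ is contained in the finite set $\{\mathfrak P\cap R:\mathfrak P\in\Ass_{\fR}(\fN)\}$.

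For eventual constancy, fix such a candidate $\p$ and set $K=(0:_{\fN}\p)$. As $\p$ is generated in degree $0$, $K$ is a graded $\fR$-submodule of $\fN$ with $K_n=(0:_{N_n}\p)$, and a direct check shows $\p\in\Ass_R(N_n)$ if and only if $(K_n)_{\p}\ne 0$ (here $K_n$ is a finitely generated $R/\p$-module, and $\p$ is associated to it exactly when it survives localization at $\p$). Localizing the graded module $K$ at the degree-$0$ multiplicative set $R\setminus\p$ produces a finitely generated graded module $S^{-1}K$ over the standard graded algebra $S^{-1}\fR$ with $(S^{-1}\fR)_0=R_{\p}$, whose $n$-th piece is exactly $(K_n)_{\p}$. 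So the problem reduces to a single question about a finitely generated graded module over a standard graded ring: is the nonvanishing of its graded pieces eventually constant?

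This last point is where the graded hypothesis does the work, and I regard it as the crux. Because $S^{-1}\fR$ is standard graded, there is an $n_1$ with $(S^{-1}K)_{n}=(S^{-1}\fR)_1\cdot(S^{-1}K)_{n-1}$ for all $n>n_1$; consequently once a graded piece vanishes all later ones vanish, so $\{n\ge n_1:(S^{-1}K)_n\ne 0\}$ is an initial segment of $[n_1,\infty)$ and the truth value of $(K_n)_{\p}\ne 0$ is constant for large $n$. Taking the maximum of these thresholds over the finite candidate set produces a single $n_0$ beyond which $\Ass_R(N_n)$ is constant. The main obstacle is not the finiteness step, which is a clean contraction-of-associated-primes argument, but setting up the degree-$0$ localization so that the grading is preserved and then extracting the eventual dichotomy from standard-gradedness.
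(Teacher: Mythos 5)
Your argument is correct and complete. Note, however, that the paper does not prove this lemma at all: it is quoted as a known result of Brodmann (with Melkersson's paper as an alternative reference), so there is no internal proof to compare against. What you have written is a self-contained rendition of the classical argument behind that citation, and both halves check out: the finiteness step is sound (the contraction $\ann_{\fR}(x)\cap R=\p$ gives $\sqrt{\ann_{\fR}(x)}\cap R=\p$ because $\p$ is prime, the minimal primes of $\ann_{\fR}(x)$ lie in the finite set $\Ass_{\fR}(\fN)$ since $\fR x\subseteq\fN$, and primeness of $\p$ extracts a single $\mathfrak P$ from the finite intersection of contractions); and the stability step correctly reduces membership of $\p$ in $\Ass_R(N_n)$ to nonvanishing of $(K_n)_\p$ for $K=(0:_{\fN}\p)$, using that $K_n$ is an $R/\p$-module so that $\p$ is associated to it precisely when it is in the support, after which standard gradedness of $S^{-1}\fR$ forces the nonvanishing locus $\{n\ge n_1 : (S^{-1}K)_n\ne 0\}$ to be an initial segment and hence the truth value to stabilize. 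The only thing your write-up buys beyond the paper is self-containedness; conversely, the paper's choice to cite the result reflects that this is exactly the standard proof in the literature.
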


\begin{lemma}\label{L23}{\rm (\cite[Theorem 1.1]{CHK})} Let $k$ be an integer with $k\ge -1$. Then $\depth_k(I,N_n)$ is independent of $n$ for large $n$.
\end{lemma}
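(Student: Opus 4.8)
The plan is to convert $\depth_k(I,N_n)$ into a statement about associated primes of finitely generated graded $\fR$-modules, so that Brodmann's stability theorem (Lemma \ref{L22}) applies. The first step is purely module-theoretic and needs no grading. Taking $i=k+1$ in Lemma \ref{L21} (legal since $k\ge -1$, so $k-i=-1$) gives
$$\depth_k(I,N)=\inf\{\depth(IR_\p,N_\p)\mid\p\in\Supp_R(N/IN)_{\ge k+1}\}.$$
Since $\depth(IR_\p,N_\p)=\inf\{i\mid(\Ext^i_R(R/I,N))_\p\not=0\}$ and $\Supp_R\Ext^i_R(R/I,N)\sb\Supp_R(N/IN)$, interchanging the two infima yields the characterization
$$\depth_k(I,N)=\inf\{i\ge 0\mid\dim_R\Supp_R\Ext^i_R(R/I,N)>k\}.$$
This handles all $k$ uniformly, so no induction on $k$ will be needed. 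Note also $\depth_k(I,N)=\infty$ precisely when $\Supp_R(N/IN)_{\ge k+1}=\emptyset$, i.e. $\dim_R(N/IN)\le k$.

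Next I would upgrade each $\Ext$ to a graded object. Choosing a resolution $F_\bullet\to R/I$ by finitely generated free $R$-modules, the complex $\Hom_R(F_\bullet,\fN)$ has terms $\fN^{b_j}$, which are finitely generated graded $\fR$-modules, and its differentials are given by matrices over $R_0=R$, hence are $\fR$-linear of degree zero. Therefore $E^i:=\Ext^i_R(R/I,\fN)$ is a finitely generated graded $\fR$-module (a subquotient of finitely generated modules over the Noetherian ring $\fR$) with $(E^i)_n=\Ext^i_R(R/I,N_n)$. Applying Lemma \ref{L22} to each $E^i$, the set $\Ass_R\Ext^i_R(R/I,N_n)$ is stable for large $n$; since $\Supp_R((E^i)_n)=\bigcup_{\p\in\Ass_R((E^i)_n)}V(\p)$, the integer $\dim_R\Supp_R\Ext^i_R(R/I,N_n)$ is eventually constant in $n$, for each fixed $i$.

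It remains to pass from this degreewise stabilization to stabilization of the infimum over $i$. First, $\Supp_R(N_n/IN_n)=\Supp_R(N_n)\cap V(I)$ is stable for large $n$ by Lemma \ref{L22}, so $\dim_R(N_n/IN_n)$ is eventually constant and hence the truth value of ``$\depth_k(I,N_n)=\infty$'' is eventually constant. If it is eventually $\infty$, we are done. Otherwise $\depth_k(I,N_n)<\infty$ for large $n$, and then the formula above bounds $\depth_k(I,N_n)\le\dim R$ (each local depth occurring in it is at most $\dim R$). Choosing $n$ large enough that $\dim_R\Supp_R\Ext^i_R(R/I,N_n)$ has stabilized for every $i\le\dim R$ (finitely many indices), the value $\inf\{i\mid\dim_R\Supp_R\Ext^i_R(R/I,N_n)>k\}$ is computed inside this fixed finite range and is therefore eventually constant.

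The main obstacle will be exactly this last uniformity step: the stabilization of $\dim_R\Supp_R\Ext^i$ is a priori only pointwise in $i$, so one must prevent the first crossing index from escaping to infinity as $n$ grows. The finite/infinite dichotomy together with the uniform bound $\depth_k\le\dim R$ (valid once the value is finite) is precisely what legitimizes interchanging $\inf_i$ with the limit in $n$. The other point requiring care is the verification that $\bigoplus_n\Ext^i_R(R/I,N_n)$ is genuinely a finitely generated graded $\fR$-module, which rests on the differentials of $\Hom_R(F_\bullet,\fN)$ being $R_0$-matrices and hence $\fR$-linear of degree zero; this is what makes Lemma \ref{L22} applicable and is the conceptual heart of the argument.
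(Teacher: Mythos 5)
Your proof is correct, and it is worth noting at the outset that the paper itself contains no argument for Lemma \ref{L23}: the statement is imported verbatim from \cite[Theorem 1.1]{CHK}, so there is no internal proof to match. Your route differs from the sequence-based, inductive pattern of \cite{CHK} that is echoed in this paper's own Lemma \ref{L24} (stabilize $\Ass_R(N_n)$ by Lemma \ref{L22}, choose $x_1\in I$ avoiding the stable associated primes of dimension $>k$, pass to $\fN/x_1\fN$, and induct). Instead you first extract from Lemma \ref{L21} with $i=k+1$ the closed-form characterization $\depth_k(I,N)=\inf\{i\mid \dim_R\Supp_R\Ext^i_R(R/I,N)>k\}$; the interchange of infima there is legitimate precisely because $\Supp_R\Ext^i_R(R/I,N)\subseteq\Supp_R(N/IN)$ and because Rees' theorem applies at each $\p\in\Supp_R(N/IN)$ (where $N_\p\ne IN_\p$, so each local depth is finite and attained). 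You then correctly identify $\Ext^i_R(R/I,\fN)=\bigoplus_n\Ext^i_R(R/I,N_n)$ as a finitely generated graded $\fR$-module, via a resolution of $R/I$ by finite free $R$-modules whose differentials are $R_0$-matrices, so Lemma \ref{L22} applies to it. The two genuinely delicate points are exactly the ones you flagged and resolved: the finite/infinite dichotomy is eventually constant because $\Ass_R(N_n/IN_n)$, hence $\dim_R(N_n/IN_n)$, stabilizes; and in the finite regime the uniform bound $\depth_k(I,N_n)\le\dim_R R$ (each local depth is at most $\dim (N_n)_\p\le\dim R_\p$) confines the infimum to the fixed window $0\le i\le\dim R$, so only finitely many stabilization thresholds in $n$ are ever invoked and the first crossing index cannot drift. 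What your approach buys is a non-inductive proof, uniform in $k$, which yields as a by-product the eventual stability of each set $\Ass_R(\Ext^i_R(R/I,N_n))$ (a special case of Corollary \ref{Ce}); what the sequence-theoretic proof of \cite{CHK} buys is the explicit sequence $x_1,\ldots,x_r$ that works simultaneously for all large $n$, which this paper still needs separately (Lemma \ref{L24}) to run the proof of Theorem \ref{T22} via Theorem \ref{T11} — so your shortcut proves the lemma but would not eliminate that construction from the paper.
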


\noindent In view of Lemma \ref{L23}, if $r=\depth_k(I,N_n)$ for all large $n$ then we call that $r$ is the eventual value of $\depth_k(I,N_n)$.

\begin{lemma}\label{L24} Let $k$ be an integer with $k\ge -1$ and $r$ the eventual value of $\depth_k(I,N_n)$. Assume that $1\le r\le\infty$. Then there exists a sequence $x_1,\ldots,x_r$ in $I$ which is an $N_n$-sequence in dimension $>k$ for all large $n$.
\end{lemma}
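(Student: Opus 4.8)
The plan is to build the sequence $x_1,\ldots,x_r$ one element at a time, at each stage using prime avoidance to select an element of $I$ that simultaneously avoids the relevant high-dimensional associated primes for all large $n$. Two observations make this feasible. First, I would take each $x_j$ in $R_0=R$, i.e.\ of degree zero; then $(x_1,\ldots,x_j)\fN$ is a graded submodule of $\fN$, the quotient $\fN^{(j)}:=\fN/(x_1,\ldots,x_j)\fN=\oplus_{n}N_n/(x_1,\ldots,x_j)N_n$ is again a finitely generated graded $\fR$-module, and so Lemma \ref{L22} applies to it. Hence $\Ass_R(N_n/(x_1,\ldots,x_j)N_n)$ is stable for large $n$, collapsing the infinitely many $n$-dependent membership conditions into a single finite set of primes.

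Proceeding by induction, suppose $x_1,\ldots,x_{i-1}\in I$ (with $i-1<r$) have been chosen so that $x_1,\ldots,x_{i-1}$ is an $N_n$-sequence in dimension $>k$ for all large $n$; the empty sequence handles $i=1$. Let $A$ be the stable value of $\Ass_R(N_n/(x_1,\ldots,x_{i-1})N_n)$, so $A_{>k}$ is a finite set of primes. I want $x_i\in I$ with $x_i\notin\p$ for every $\p\in A_{>k}$, and by prime avoidance such $x_i$ exists as soon as $I\not\subseteq\p$ for each $\p\in A_{>k}$. To verify this, note that since all maximal $N_n$-sequences in dimension $>k$ in $I$ have length $r$ (Remark \ref{RM1}(ii), via Lemma \ref{L23}) and $x_1,\ldots,x_{i-1}$ has length $i-1<r$, this sequence is not maximal, hence extendable inside $I$ for each large $n$; extendability means exactly that $I$ lies in none of the primes of $\Ass_R(N_n/(x_1,\ldots,x_{i-1})N_n)_{>k}=A_{>k}$. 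Thus the required $x_i\in I$ exists.

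Having fixed $x_i$, for all large $n$ one has $x_i\notin\p$ for every $\p\in\Ass_R(N_n/(x_1,\ldots,x_{i-1})N_n)_{>k}$, which together with the inductive hypothesis shows $x_1,\ldots,x_i$ is an $N_n$-sequence in dimension $>k$ for all large $n$. When $r<\infty$ the construction terminates after $r$ steps, and I take the maximum of the finitely many thresholds on $n$ to obtain a single bound beyond which $x_1,\ldots,x_r$ works; when $r=\infty$ the same recipe produces an infinite sequence every finite initial segment of which has the desired property, with a threshold on $n$ that may grow with the length of the segment.

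The step I expect to be the main obstacle is precisely this simultaneity over $n$: because the defining condition of an $N_n$-sequence in dimension $>k$ varies with $n$, a naive choice of $x_i$ might succeed for some $n$ and fail for others. The argument hinges on Lemma \ref{L22} to reduce these conditions to avoiding one finite set $A_{>k}$, and on the positivity of $\depth_k(I,-)$ on the quotient (forced by $i-1<r$) to ensure $I$ escapes each prime in $A_{>k}$, so that prime avoidance may be applied once and for all. The only point requiring care is that the ``large $n$'' threshold is permitted to depend on $i$; this is harmless for finite $r$ and is exactly the content recorded by the infinite case.
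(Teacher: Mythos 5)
Your proposal is correct and follows essentially the same route as the paper: induct on the length of the sequence, use Lemma \ref{L22} applied to the graded quotient $\fN/(x_1,\ldots,x_{i-1})\fN$ to collapse the $n$-dependent conditions into avoiding one finite stable set of primes, and then apply prime avoidance, the non-maximality of the partial sequence (equivalently, positivity of the remaining $\depth_k$) guaranteeing that $I$ is contained in none of those primes. Your version merely spells out the degree-zero and threshold bookkeeping that the paper's terser induction leaves implicit.
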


\begin{proof} Let $r\ge 1$ be the eventual value of $\depth_k(I,N_n)$.  By Lemma \ref{L22} we can choose $x_1\in I$ such that $x_1\notin\p$ for all $\p\in\Ass_R(N_n)_{>k}$  and all large $n$. Then $\depth_k(I,N_n/x_1N_n)=r-1$ for all large $n$. By the induction assumption, there exists a sequence $x_2,\ldots,x_n\in I$ to be an $N_n/x_1N_n$-sequence in dimension $>k$ for all large $n$. Thus $x_1,\ldots,x_r$ is the sequence as required.
\end{proof}

\noindent{\it Proof of Theorem \ref{T2}.} Theorem \ref{T2} follows immediately from the following theorem when $M=R$. 

\begin{theorem}\label{T22} Let $(R,\m)$ be a Noetherian local ring, $I$ an ideal of $R$ and $M$ finitely generated $R$-module. Let $\fR=\oplus_{n\ge 0}R_n$ be a finitely generated standard graded algebra over $R_0=R$ and $\fN=\oplus_{n\ge 0}N_n$ a finitely generated graded $\fR$-module. For each integer $k\ge -1$, let $r$ be the eventual value of $\depth_k(I_M,N_n)$. Then for each integer $l\le r$, the set $$\bigcup_{j\le l}\Ass_R(H^j_I(M,N_n))_{\ge k}$$ is finite and stable for large $n$.
\end{theorem}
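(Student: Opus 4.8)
The plan is to fix an integer $l\le r$ and reduce the computation of $\bigcup_{j\le l}\Ass_R(H^j_I(M,N_n))_{\ge k}$ to a question about associated primes of genuine graded quotient modules, to which Brodmann's stability theorem (Lemma \ref{L22}) applies. The first observation is that the ideal $I_M=\ann_R(M/IM)$ is fixed: it depends only on $R$, $I$, $M$, and not on $n$. Hence Lemmas \ref{L23} and \ref{L24} apply verbatim with $I$ replaced by $I_M$, giving that $\depth_k(I_M,N_n)$ is eventually equal to $r$ and, assuming $r\ge 1$, that there is a single sequence $x_1,\ldots,x_r$ of elements of $I_M$ which is an $N_n$-sequence in dimension $>k$ simultaneously for all large $n$. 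For each such $n$ this sequence is maximal of length $\depth_k(I_M,N_n)=r$, so the hypotheses of Theorem \ref{T11} are met with $N=N_n$. (The base case $r=0$ needs no sequence at all: one works directly with $\fN$ itself.)

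Next, for each large $n$ I would invoke Theorem \ref{T11} to obtain, for our fixed $l\le r$,
\[
\bigcup_{j\le l}\Ass_R(H^j_I(M,N_n))_{\ge k}=\bigcup_{j\le l}\Ass_R\big(N_n/(x_1,\ldots,x_j)N_n\big)_{\ge k}\cap V(I_M).
\]
The value of this identity is that its right-hand side involves only the modules $N_n/(x_1,\ldots,x_j)N_n$. Since each $x_i$ lies in $R=R_0$ and hence acts as a degree-zero operator, $(x_1,\ldots,x_j)\fN$ is a graded submodule and $\fN/(x_1,\ldots,x_j)\fN=\bigoplus_{n\ge 0}N_n/(x_1,\ldots,x_j)N_n$ is again a finitely generated graded $\fR$-module. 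Thus Lemma \ref{L22} shows that, for each of the finitely many fixed $j\le l$, the set $\Ass_R(N_n/(x_1,\ldots,x_j)N_n)$ is stable for large $n$. Taking the finite union over $j\le l$ and then intersecting with the two fixed conditions $\dim(R/\p)\ge k$ and $\p\in V(I_M)$ preserves stability, so the right-hand side, and therefore the left-hand side, is stable for large $n$. Finiteness is then automatic, since the eventual value is a single finite union $\bigcup_{j\le l}\Ass_R(N_n/(x_1,\ldots,x_j)N_n)_{\ge k}\cap V(I_M)$ of finite sets.

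The main obstacle is the degenerate case $r=\infty$, in which Theorem \ref{T11} as stated does not apply (it assumes $r<\infty$) yet $l$ still ranges over all integers. My plan here is to fix a finite $l$ and build, exactly as in the proof of Lemma \ref{L24}, a length-$l$ sequence $x_1,\ldots,x_l$ in $I_M$ that is an $N_n$-sequence in dimension $>k$ for all large $n$. I would then re-examine the proof of Theorem \ref{T11} and note that it is local in character: the applications of Lemmas \ref{L31} and \ref{L32} take place at primes $\p$ where $\depth((I_M)_\p,N_\p)$ is finite (in fact equal to the index $j_0\le l$ picked out there), so the displayed identity continues to hold for this fixed $l$ using only the length-$l$ sequence, regardless of whether the global depth is infinite. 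Once the identity is secured, the stability argument of the preceding paragraph carries over unchanged.

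A small auxiliary point to check along the way, needed to keep the two cases consistent, is the support containment $\Supp_R H^j_I(M,N)\subseteq\Supp_R(N/I_M N)$, which follows because $M/I^nM$ and $M/IM$ have the same radical annihilator; this is what legitimizes intersecting with $V(I_M)$ in the formula. With this in place, the passage from Theorem \ref{T11} to Theorem \ref{T22} is essentially the combination of the uniform sequence from Lemma \ref{L24} with the graded stability from Lemma \ref{L22}, and Theorem \ref{T2} is recovered by specializing $M=R$.
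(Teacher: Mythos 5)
Your treatment of the main case $r<\infty$, $1\le l\le r$, is exactly the paper's: Lemma \ref{L24} produces one sequence $x_1,\dots,x_r$ in $I_M$ that is an $N_n$-sequence in dimension $>k$ for all large $n$ simultaneously, Theorem \ref{T11} converts $\bigcup_{j\le l}\Ass_R(H^j_I(M,N_n))_{\ge k}$ into $\bigcup_{j\le l}\Ass_R(N_n/(x_1,\dots,x_j)N_n)_{\ge k}\cap V(I_M)$, and Lemma \ref{L22} applied to the graded modules $\fN/(x_1,\dots,x_j)\fN$ gives finiteness and stability (the $l=0$ subcase being handled by Lemma \ref{L22} directly). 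Where you genuinely diverge is the case $r=\infty$, which is equivalent to $\dim(N_n/I_MN_n)\le k$ for large $n$. The paper splits this into two further cases: $d<k$, where the set in question is empty, and $d=k$, where it observes that every $\p\in\Ass_R(H^j_I(M,N_n))_{\ge k}$ is a minimal prime of $\Supp_R(N_n/I_MN_n)$ of dimension exactly $k$, and then deduces stability from the eventual constancy of the localized depths $\depth((I_M)_\p,(N_n)_\p)$ over the finitely many candidate primes. You instead prolong the sequence to length $l$ inside $I_M$ (possible precisely because no prime of $\Ass_R(N_n/(x_1,\dots,x_{i-1})N_n)_{>k}$ can contain $I_M$ when $\dim(N_n/I_MN_n)\le k$) and apply a truncated version of Theorem \ref{T11}. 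That extension is legitimate: inspecting the proof of Theorem \ref{T11}, only the first $j_0\le l$ (respectively $e\le l$) terms of the sequence are ever used and maximality plays no role, so the displayed identity holds for any $N$-sequence in dimension $>k$ in $I_M$ of length $l$. Your route buys uniformity — all three of the paper's cases collapse into a single argument — at the cost of having to restate and re-verify Theorem \ref{T11}, whose hypotheses as written ($r<\infty$ and the sequence of maximal length $r$) do not literally cover your application; as long as that reinspection is made explicit, the proof is correct.
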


\begin{proof} Let $r$ be the eventual value of $\depth_k(I_M,N_n)$.  We need to show that for any integer  $l\le r$ the set $\bigcup_{j\le l}\Ass_R(H^j_I(M,N_n))_{\ge k}$ is finite and stable for large $n$.

By Lemmas \ref{L22} and \ref{L23}, we can find a large integer $t$ such that $\Ass_R(N_n/I_MN_n)$ is stable and $r=\depth_k(I_M,N_n)$ for all $n\ge t$. Set $d=\dim(N_t/I_MN_t)$. We consider three cases as follows.

\medskip

\noindent{\it Case 1.} If $d<k$, then $r=\infty$ and the conclusion follows as $$\bigcup_{j\le l}\Ass_R(H^j_I(M,N_n))_{\ge k}=\emptyset$$ for all $n\ge t$.

\medskip

\noindent{\it Case 2.} If $d=k$, then $r=\infty$. It follows for all positive integers $n\ge t$ that 
$$\bigcup_{j\le l}\Ass_R(H^j_I(M,N_n))_{\ge d}\subseteq\Supp_R(N_n/I_MN_n)_{\ge d}=\Ass_R(N_n/I_MN_n)_{\ge d},$$
since the set $\Ass_R(N_n/I_MN_n)_{\ge d}$ only consists of all minimal elements with $\dim(R/\p)=d$. Hence 
$$X=\bigcup_{n\ge t}\bigcup_{j\le l}\Ass_R(H^j_I(M,N_n))_{\ge d}\subseteq\bigcup_{n\ge t}\Ass_R(N_n/I_MN_n),$$ 
therefore $X$ is finite by Lemma \ref{L22}. Thus we can take $t$ sufficiently large such that for each $\p\in X$ then 
$$\p\in\bigcup_{j\le l}\Ass_R(H^j_I(M,N_n))_{\ge d}$$ 
for infinitely many $n\ge t$. Now for each $\p\in X$ we set $s$ to be the eventual value of $\depth((I_M)_\p,(N_n)_\p)$, then we can write $s=\depth((I_M)_\p,(N_n)_\p)$ for all $n\ge n(\p)$ for some integer $n(\p)\ge t$. Then $H^s_I(M,N_n)_\p\not=0$ for all $n\ge n(\p)$. Moreover, by the definition of $X$, it implies that $s\le l$. Hence $\p$ is a minimal element of $\Supp_R(H^s_I(M,N_n))$, and so that 
$$\p\in\Ass_R(H^s_I(M,N_n))_{\ge d}\subseteq\bigcup_{j\le l}\Ass_R(H^j_I(M,N_n))_{\ge d}$$
for all $n\ge n(\p)$. Therefore the set 
$$\bigcup_{j\le l}\Ass_R(H^j_I(M,N_n))_{\ge d}$$ 
is finite and stable for all $n\ge\max\{n(\p)\mid\p\in X\}$.

\medskip

\noindent{\it Case 3.} Assume that $d>k$. Then $r<\infty$. If $l=0$, then 
$$\Ass_R(H^0_I(M,N_n))_{\ge k}=\Ass_R(N_n)_{\ge k}\cap V(I_M)$$ 
is finite and stable for large $n$ by Lemma \ref{L22}. Assume that $1\le l\le r$. In this case, by Lemma \ref{L24}, there exists a sequence $x_1,\ldots,x_r$ in $I_M$ which is an $N_n$-sequence in dimension $>k$ for all $n\ge u$ for some integer $u\ge t$. By Theorem \ref{T11}, we get the following equality
$$\bigcup_{j\le l}\Ass_R(H^j_I(M,N_n))_{\ge k}=\bigcup_{j\le l}\Ass_R(N_n/(x_1,\ldots,x_j)N_n)_{\ge k}\cap V(I_M)$$
for all $n\ge u$. Therefore we conclude by Lemma \ref{L22} that the following set
$$\bigcup_{j\le l}\Ass_R(H^j_I(M,N_n))_{\ge k}$$ 
is finite and stable for large $n$ as required.
\end{proof}

The rest of this section devotes to consider many consequences of Theorem \ref{T22}. By replacing $k=-1, 0, 1$ in Theorem \ref{T22}, we get the following result which is an extension of the main result in  \cite[Theorem 1.2]{CHK} for generalized local cohomology modules.

\begin{corollary}\label{C35} Let $r$, $r_0$ and $r_1$ be the eventual values of 
$\depth(I_M,N_n)$, $\f-depth(I_M,N_n)$ and $\gdepth(I_M,N_n)$, respectively. Then the following statements are true.
\begin{itemize}
\item[\rm (i)] The set $\Ass_R\big(H^r_I(M,N_n)\big)$ is finite and stable for large $n$.
\item[\rm (ii)] For each integer $l_0\le r_0$ the set $\bigcup_{j\le l_0}\Ass_R(H^j_I(M,N_n))$ is finite and stable for large $n$.
\item[\rm (iii)] For each integer $l_1\le r_1$ the set $\bigcup_{j\le l_1}\Ass_R(H^j_I(M,N_n))\cup\{\m\}$ is finite and stable for large $n$.
\end{itemize} 
\end{corollary}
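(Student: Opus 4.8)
The plan is to derive all three statements from Theorem~\ref{T22} by specializing the parameter $k$, using the identifications recorded in Remark~\ref{RM1}(ii): $\depth_{-1}(I_M,N_n)=\depth(I_M,N_n)$, $\depth_0(I_M,N_n)=\f-depth(I_M,N_n)$ and $\depth_1(I_M,N_n)=\gdepth(I_M,N_n)$. Thus $r$, $r_0$ and $r_1$ are exactly the eventual values of $\depth_k(I_M,N_n)$ for $k=-1$, $k=0$ and $k=1$, and the only work left is to translate the decoration $(-)_{\ge k}$ appearing in Theorem~\ref{T22} into the undecorated sets of the corollary.

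For parts (ii) and (iii) this translation is purely set-theoretic. First I would observe that every prime $\p$ satisfies $\dim(R/\p)\ge 0$, so $S_{\ge 0}=S$ for every $S\subseteq\Spec R$; hence for $k=0$ and each $l_0\le r_0$ one has $\bigcup_{j\le l_0}\Ass_R(H^j_I(M,N_n))_{\ge 0}=\bigcup_{j\le l_0}\Ass_R(H^j_I(M,N_n))$, and Theorem~\ref{T22} gives (ii) directly. For $k=1$ I would use that $\dim(R/\p)=0$ holds exactly when $\p=\m$, so $S_{\ge 1}=S\setminus\{\m\}$; consequently
$$\bigcup_{j\le l_1}\Ass_R(H^j_I(M,N_n))\cup\{\m\}=\Big(\bigcup_{j\le l_1}\Ass_R(H^j_I(M,N_n))_{\ge 1}\Big)\cup\{\m\}.$$
Applying Theorem~\ref{T22} with $k=1$ to the right-hand side, and noting that adjoining the single fixed ideal $\m$ preserves both finiteness and eventual constancy, yields (iii).

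For part (i) I would take $k=-1$, where again $S_{\ge -1}=S$, and apply Theorem~\ref{T22} with $l=r$ to conclude that $\bigcup_{j\le r}\Ass_R(H^j_I(M,N_n))$ is finite and stable for large $n$. It then remains to collapse this union onto the single module $H^r_I(M,N_n)$. Here I would invoke Lemma~\ref{L31}: since $\depth(I_M,N_n)=r$ for all large $n$ and $\depth(I_M,N_n)=\inf\{i\mid H^i_I(M,N_n)\ne 0\}$, the modules $H^j_I(M,N_n)$ vanish for every $j<r$ once $n$ is large, so $\Ass_R(H^j_I(M,N_n))=\emptyset$ for $j<r$ and the union reduces to $\Ass_R(H^r_I(M,N_n))$. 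This gives (i), understood in the case $r<\infty$ that is implicit in the notation $H^r_I(M,N_n)$.

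The main obstacle is slight: the two union statements (ii) and (iii) are immediate bookkeeping on the dimension filtration $S_{\ge k}$, whereas the genuine content lies in the reduction step of (i), where the collapse of $\bigcup_{j\le r}$ onto the top module $H^r_I(M,N_n)$ is not formal and relies essentially on the vanishing of all lower local cohomology modules supplied by Lemma~\ref{L31}. I would also keep track of the fact that the thresholds for ``large $n$'' coming from Theorem~\ref{T22}, from Lemma~\ref{L31}, and from the stability of $\depth(I_M,N_n)$ can be chosen simultaneously, so that a single eventual range of $n$ validates finiteness and stability at once.
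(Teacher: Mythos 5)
Your proposal is correct and follows essentially the same route as the paper: parts (ii) and (iii) are the same bookkeeping with $S_{\ge 0}=S$ and $S_{\ge 1}=S\setminus\{\m\}$ applied to Theorem \ref{T22}, and part (i) is the same reduction via the vanishing $H^j_I(M,N_n)=0$ for $j<r$ (which the paper uses implicitly and you correctly trace back to Lemma \ref{L31}).
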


\begin{proof} (i) Since $H^j_I(M,N_n)=0$ for all $j<r$ and all large $n$, so that
$$\bigcup_{j\le r}\Ass_R(H^j_I(M,N_n))_{\ge -1}=\Ass_R\big(H^r_I(M,N_n)\big)$$ 
is finite and stable for large $n$ by Theorem \ref{T22}.\\
(ii) The conclusion follows from Theorem \ref{T22} and the  equality
$$\bigcup_{j\le l_0}\Ass_R(H^j_I(M,N_n))_{\ge 0}=\bigcup_{j\le l_0}\Ass_R(H^j_I(M,N_n))$$
for all $l_0\le r_0$ and all large $n$.\\
(iii) For each $l_1\le r_1$ we have
$$\bigcup_{j\le l_1}\Ass_R(H^j_I(M,N_n))_{\ge 1}\cup\{\m\}=\bigcup_{j\le l_1}\Ass_R(H^j_I(M,N_n))\cup\{\m\}$$
for all large $n$. Therefore the result follows by Theorem \ref{T22}. 
\end{proof}

Note that if $I=\sqrt{\ann(M)}$ then $H^j_I(M,N_n)=\Ext^j_R(M,N_n)$ for all $j\ge 0$ by Lemma \ref{Lmbs}. Moreover, in this case we have $I_M=\sqrt{I_M}=I$. Therefore Theorem \ref{T22} leads to the following immediate consequence.

\begin{corollary}\label{Ce}  Let $k$ be an integer with $k\ge -1$ and $r$ the eventual value of $\depth_k(\ann(M),N_n)$. Then for any $l\le r$ the set $\bigcup_{j\le l}\Ass_R(\Ext^j_R(M,N_n))_{\ge k}$ is stable for large $n$.
\end{corollary}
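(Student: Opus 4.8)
The plan is to realize each $\Ext^j_R(M,N_n)$ as a generalized local cohomology module with respect to a single fixed ideal, and then read off the conclusion directly from Theorem \ref{T22}. Concretely, I would set $I=\sqrt{\ann(M)}$ and check three compatibility statements: (a) $I_M=I$, so that the quantity $\depth_k(I_M,N_n)$ governing Theorem \ref{T22} is the one attached to this $I$; (b) $\depth_k(\ann(M),N_n)=\depth_k(I,N_n)$ for every $n$, so that the eventual value $r$ of the hypothesis coincides with the eventual value of $\depth_k(I_M,N_n)$; and (c) $H^j_I(M,N_n)\cong\Ext^j_R(M,N_n)$ for all $j\ge 0$ and all $n$. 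Granting these, Theorem \ref{T22} applied to this particular $I$ gives that $\bigcup_{j\le l}\Ass_R(H^j_I(M,N_n))_{\ge k}$ is stable for large $n$, and (c) rewrites the left-hand side as $\bigcup_{j\le l}\Ass_R(\Ext^j_R(M,N_n))_{\ge k}$, which is the assertion.

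For (a) I would argue that $\Supp_R(M/IM)=\Supp_R(M)\cap V(I)=V(I)$, since $I=\sqrt{\ann(M)}$ forces $\Supp_R(M)=V(\ann(M))=V(I)$; hence $\sqrt{I_M}=\sqrt{\ann(M/IM)}=I$, and from $I\subseteq I_M\subseteq\sqrt{I_M}=I$ one concludes $I_M=I=\sqrt{I_M}$. This is exactly the computation already used in the proof of Corollary \ref{BN1}, so I expect it to be routine. For (b), since $\ann(M)$ and $I=\sqrt{\ann(M)}$ have the same radical and, by Remark \ref{RM1}, replacing elements by powers preserves the property of being an $N_n$-sequence in dimension $>k$, a maximal such sequence in $\sqrt{\ann(M)}$ produces one of equal length inside $\ann(M)$ and conversely; hence $\depth_k(\ann(M),N_n)=\depth_k(I,N_n)$ for every $n$, and the eventual values agree.

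For (c), the cleanest route is Lemma \ref{Lmbs}: although that lemma is stated for $I\subseteq\ann(M)$, here $I=\sqrt{\ann(M)}$ satisfies $I^s\subseteq\ann(M)$ for some $s$, so $I^nM=0$ and $M/I^nM=M$ for all $n\ge s$; the direct system defining $H^j_I(M,N_n)=\dlim\Ext^j_R(M/I^nM,N_n)$ is then eventually constant equal to $\Ext^j_R(M,N_n)$, giving the isomorphism. I expect this identification, together with the bookkeeping in (a) and (b) matching the hypotheses of Theorem \ref{T22} verbatim, to be the only genuinely delicate point: one must verify that the single ideal $I$ simultaneously computes the Ext-modules, satisfies $I_M=I$, and carries the same $\depth_k$ as $\ann(M)$. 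Once these are aligned there is no remaining work, and the corollary is an immediate specialization of Theorem \ref{T22}.
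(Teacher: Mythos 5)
Your proposal is correct and follows essentially the same route as the paper: set $I=\sqrt{\ann(M)}$, observe $I_M=\sqrt{I_M}=I$ and $H^j_I(M,N_n)\cong\Ext^j_R(M,N_n)$ via Lemma \ref{Lmbs}, and invoke Theorem \ref{T22}. Your points (b) and (c) merely spell out details (invariance of $\depth_k$ under passage to the radical, and the stabilization of the direct system when $I^sM=0$) that the paper leaves implicit.
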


For an integer $j\ge 0$, an ideal $I$ of $R$, a system $\underline a=(a_1,\ldots,a_s)$ of elements of $R$, and a $s$-tuple $\underline t=(t_1,\ldots,t_s)\in\Bbb N^s$, as in the previous section we set
$$
\begin{aligned}
&T^j(I^t,N_n)=\Ass_R(\Ext^j_R(R/I^t,N_n)),\notag\\
&T^j(\underline a^{\underline t}, N_n)=\Ass_R(\Ext^j_R(R/(a_1^{t_1},\ldots,a_s^{t_s}),N_n)).\notag
\end{aligned}
$$
Then we have
\begin{corollary}\label{BN3} Let $k\ge -1$ be an integer, $I$ an ideal of $R$ and $r$ the eventual value of $\depth_k(I,N_n)$. Let $a_1,\ldots,a_s$ be a system of generators of $I$ and an integer $l\le r$. Then for all positive integers $t, t_1,\ldots,t_s$ the following sets
$$\bigcup_{j\le l}T^j(I^t,N_n)_{\ge k}\text{ and }\bigcup_{j\le l}T^j(\underline a^{\underline t},N_n)_{\ge k}$$ 
are stable for large $n$. Moreover, if $r<\infty$ then 
$$\bigcup_{j\le l}T^j(I^t,N_n)_{\ge k}\text{ = }\bigcup_{j\le l}T^j(\underline a^{\underline t},N_n)_{\ge k}$$ 
are stable for all large $n$.
\end{corollary}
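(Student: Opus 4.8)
The plan is to reduce both families of $\Ext$-modules to generalized local cohomology and then feed them into the graded stability result Theorem \ref{T22}, in its packaged form Corollary \ref{Ce}, while the asserted equality will come from the non-graded Corollary \ref{BN2} applied one degree $n$ at a time.

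First I would isolate the single bookkeeping fact that makes all the hypotheses match up, namely that $\depth_k(-,N_n)$ depends only on the radical of its ideal argument. Since $\sqrt{I^t}=\sqrt I=\sqrt{(a_1^{t_1},\ldots,a_s^{t_s})}$ for all positive integers $t,t_1,\ldots,t_s$, this gives $\depth_k(I^t,N_n)=\depth_k((a_1^{t_1},\ldots,a_s^{t_s}),N_n)=\depth_k(I,N_n)$ for every $n$; this is exactly the invariance already exploited in the proof of Corollary \ref{BN2}. Hence all three sequences of depths share one and the same eventual value $r$, independent of $t$ and of $\underline t$.

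For the stability statement I would apply Corollary \ref{Ce} twice, to the two fixed finitely generated modules $M=R/I^t$ and $M=R/(a_1^{t_1},\ldots,a_s^{t_s})$. Because $\ann(R/I^t)=I^t$ and $\ann(R/(a_1^{t_1},\ldots,a_s^{t_s}))=(a_1^{t_1},\ldots,a_s^{t_s})$, the previous step identifies the eventual value of $\depth_k(\ann M,N_n)$ with $r$ in both cases. Corollary \ref{Ce} (which rests on the identification $\Ext^j_R(M,N_n)\cong H^j_{\sqrt{\ann M}}(M,N_n)$ furnished by Lemma \ref{Lmbs}) then says that for every $l\le r$ the sets $\bigcup_{j\le l}T^j(I^t,N_n)_{\ge k}$ and $\bigcup_{j\le l}T^j(\underline a^{\underline t},N_n)_{\ge k}$ are stable for large $n$.

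Finally, for the equality in the case $r<\infty$, I would apply Corollary \ref{BN2} directly to the module $N_n$, for each of the large $n$ at which $\depth_k(I,N_n)=r$. Radical invariance gives $\depth_k(\sqrt I,N_n)=r$, so an $N_n$-sequence in dimension $>k$ in $\sqrt I$ of length $r$ exists, the hypotheses of Corollary \ref{BN2} are satisfied, and that corollary yields $\bigcup_{j\le l}T^j(I^t,N_n)_{\ge k}=\bigcup_{j\le l}T^j(\underline a^{\underline t},N_n)_{\ge k}$ for every $l\le r$. Since each side is stable for large $n$ by the previous paragraph, the identity then holds for all large $n$ at once. The one step that must be handled with care, and which I regard as the main (if modest) obstacle, is the coordination of the three eventual values: checking that the bound $r$ defined through $I$ is simultaneously the correct bound for the $\Ext$-modules of $R/I^t$ and of $R/(a_1^{t_1},\ldots,a_s^{t_s})$. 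Once the radical invariance of $\depth_k$ is in place, everything else is a termwise application of results already established.
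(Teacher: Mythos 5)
Your proposal is correct and follows essentially the same route as the paper: radical invariance of $\depth_k$ to identify the three eventual values with $r$, Corollary \ref{Ce} applied to $M=R/I^t$ and $M=R/(a_1^{t_1},\ldots,a_s^{t_s})$ for stability, and Corollary \ref{BN2} applied termwise to $N_n$ for the equality when $r<\infty$. No substantive difference from the paper's argument.
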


\begin{proof} By Lemma \ref{L23}, we find a large integer $u$ such that $r=\depth_k(I,N_n)$ for all $n\ge u$. Let $t, t_1,\ldots,t_s$ be positive integers. Since $\sqrt{I^t}=\sqrt I=\sqrt{(a_1^{t_1},\ldots,a_s^{t_s})}$\ , therefore 
$$r=\depth_k(\sqrt I,N_n)=\depth_k(I^t,N_n)=\depth_k((a_1^{t_1},\ldots,a_s^{t_s}),N_n)$$ 
for all $n\ge u$. This implies from Corollary \ref{Ce} by setting $M=R/I^t$ and $M= R/(a_1^{t_1},\ldots,a_s^{t_s})$ that $\bigcup_{j\le l}T^j(I^t,N_n)_{\ge k}$ and $\bigcup_{j\le l}T^j(\underline a^{\underline t},N_n)_{\ge k}$ are stable for large $n$. 
We now assume that $r<\infty$. Then, for any $n\ge u$, we have by Corollary \ref{BN2} that 
$\bigcup_{j\le l}T^j(I^t,N_n)_{\ge k}\text{ = }\bigcup_{j\le l}T^j(\underline a^{\underline t},N_n)_{\ge k}$ 
for all for all positive integers $t, t_1,\ldots,t_s$. Thus they
are stable for all large $n$.
\end{proof}

\noindent{\bf Acknowledgements:} The authors would like to thank the referee for pointing out a mistake in Remark \ref{Rmrf}.  This work is funded by Vietnam National Foundation for Science and Technology Development (NAFOSTED), and  the second author is also partially supported by Vietnam Ministry of Training and Education under grant number B2012-TN03-02.

\end{document}